\renewcommand{\epsilon}{\varepsilon}
\newcommand{\newsection}[1]
{\subsection{#1}\setcounter{theorem}{0} \setcounter{equation}{0}
\par\noindent}
\newtheorem{theorem}{Theorem}
\newtheorem{lemma}[theorem]{Lemma}
\newtheorem{corr}[theorem]{Corollary}
\newtheorem{proposition}[theorem]{Proposition}
\newtheorem{deff}[theorem]{Definition}
\newcommand{\bth}{\begin{theorem}}
\newcommand{\ble}{\begin{lemma}}
\newcommand{\bcor}{\begin{corr}}
\newcommand{\bdeff}{\begin{deff}}
\newcommand{\bprop}{\begin{proposition}}
\newcommand{\ele}{\end{lemma}}
\newcommand{\ecor}{\end{corr}}
\newcommand{\edeff}{\end{deff}}
\newcommand{\eprop}{\end{proposition}}
\renewcommand{\Pi}{\varPi}
\renewcommand{\epsilon}{\varepsilon}
\newcommand{\K}{{\mathcal K}}
\newcommand{\R}{{\mathbb R}}
\newcommand{\la}{{\langle}}
\newcommand{\ra}{{\rangle}}
\newcommand{\cd}{{\,\cdot\,}}
\newcommand{\bdy}{{\partial\K}}
\newcommand{\ext}{{\R^n\backslash\K}}
\newcommand{\extfour}{{\R^4\backslash\K}}
\begin{document}

\title[Global existence of quasilinear wave equations]
{
Global existence for high dimensional quasilinear wave equations
exterior to star-shaped obstacles
}

\author{Jason Metcalfe}
\address{Department of Mathematics, University of North Carolina, Chapel Hill}
\email{metcalfe@email.unc.edu}

\author{Christopher D. Sogge}
\address{Department of Mathematics, Johns Hopkins University}
\email{sogge@jhu.edu}

\thanks{The authors were supported in part by the NSF}

\maketitle

\newsection{Introduction}
The purpose of this article is to study long time existence for high
dimensional quasilinear wave equations exterior to star-shaped
obstacles.  In particular, we seek to prove exterior domain analogs of
the four dimensional results of \cite{Hormander} where the
nonlinearity is permitted to depend on the solution not just its first
and second derivatives.  Previous proofs in exterior domains omitted
this dependence as it did not mesh well with the energy methods in use.  The main estimates
used in the proof are the variable coefficient localized energy
estimate of \cite{MS} as well as a constant coefficient variant of
this estimate which was developed in \cite{DMSZ}, \cite{FW}, and \cite{HMSSZ}.

Let us more specifically describe the problem at hand.  We fix a
bounded set $\K$ which has smooth boundary and is star-shaped with
respect to the origin.  Without loss of generality, we shall assume
that $\K\subset\{|x|<1\}$.  We then seek to solve the following boundary
value problem 
\begin{equation}\label{generalEquation}
  \begin{cases}
    \Box u = Q(u,u',u''),\quad (t,x)\in \R_+\times\ext,\\
    u(t,\cd)|_\bdy = 0,\\
    u(0,\cd)=f,\quad \partial_t u(0,\cd)=g
  \end{cases}
\end{equation}
where $\Box=\partial_t^2-\Delta$ is the d'Alembertian.  Here and
throughout, we use $u'=(\partial_tu,\nabla_x u)$ to denote the
space-time gradient.  The nonlinear
term $Q$ is smooth in its arguments and has the form
\begin{equation}\label{Q}
Q(u,u',u'')=A(u,u')+ 
B^{\alpha\beta}(u,u')\partial_\alpha\partial_\beta u
\end{equation}
with the convenient notation $\partial_0=\partial_t$.  Throughout this
paper, we shall utilize the summation convention where repeated
indices are summed.  Greek indices $\alpha, \beta, \gamma$ are summed from $0$ to the spatial
dimension $n$, while Latin indices $i, j, k$ are implicitly summed from $1$ to
$n$.  We shall reserve $\mu$, $\nu$, and $\sigma$ for multiindices.  Here $A$ is
taken to vanish to second order at $(u,u')=(0,0)$, and $B$ vanishes to
first order at the origin.  We also assume the symmetry condition
\begin{equation}
 \label{symmetry}
B^{\alpha\beta}(u,u')=B^{\beta\alpha}(u,u'),\quad 0\le \alpha,\beta\le n.
\end{equation}

To solve \eqref{generalEquation}, one must assume that the data
satisfy some compatibility conditions.  These are well known, and we
shall only tersely describe them.  A more detailed exposition is
available in, e.g., \cite{KSS2}.  We write $J_ku=\{\partial^\mu_x
u\,:\, 0\le |\mu|\le k\}$.  For any formal $H^m$ solution $u$, we
can write $\partial_t^k u(0,\cd)=\psi_k(J_kf,J_{k-1}g)$, $0\le k\le m$
for some compatibility functions $\psi_k$.  The compatibility
condition of order $m$ for data $(f,g)\in H^m\times H^{m-1}$ simply
requires that $\psi_k$ vanishes on $\bdy$ for $0\le k\le m-1$.  For
$(f,g)\in C^\infty$, we say that the data satisfy the compatibility
condition to infinite order if the above holds for all $m$.

Under these assumptions, we have small data global existence when $n\ge 5$.
\begin{theorem}\label{thm1.1}
Let $\K\subset \R^n$, $n\ge 5$ be a smooth, bounded, star-shaped
obstacle, and let $Q(u,u',u'')$ be as above.  Assume further that
$(f,g)\in C^\infty(\ext)$ vanish for $|x|>R$ for fixed $R$ and satisfy
the compatibility conditions to infinite order.  Then there is a
constant $\varepsilon_0$ and a positive integer $N$ so that if
$0<\varepsilon<\varepsilon_0$ and
\begin{equation}
  \label{dataSmallness}
\sum_{|\mu|\le N+1} \|\partial_x^\mu f\|_{L^2(\ext)} +
\sum_{|\mu|\le N} \|\partial_x^\mu g\|_{L^2(\ext)}\le \varepsilon,
\end{equation}
then \eqref{generalEquation} has a unique global solution $u\in C^\infty([0,\infty)\times\ext)$.
\end{theorem}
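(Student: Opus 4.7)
The plan is to run a standard Picard iteration, measured in a norm built from the localized energy estimate of \cite{MS} combined with appropriate generator vector fields, and to close it by exploiting the strong pointwise decay available in dimensions $n\ge 5$.

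I would set $u_{-1}\equiv 0$ and iteratively solve the linearized problem
\[
\Box u_k - B^{\alpha\beta}(u_{k-1},u_{k-1}')\partial_\alpha\partial_\beta u_k = A(u_{k-1},u_{k-1}'),\qquad u_k|_\bdy=0,
\]
with initial data $(f,g)$ (regularized if necessary so that the compatibility conditions hold at each level). Let $Z$ range over the admissible collection of vector fields that commute well with $\Box$ and preserve the obstacle, namely $\partial_\alpha$, the spatial rotations $\Omega_{ij}=x_i\partial_j-x_j\partial_i$, and the scaling $L=t\partial_t+r\partial_r$ (the Lorentz boosts cannot be used since they do not respect $\bdy$). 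Define
\[
M_k(T)=\sup_{0\le t\le T}\sum_{|\mu|\le N}\|Z^\mu u_k'(t,\cd)\|_{L^2(\ext)}+\sum_{|\mu|\le N}\|\langle x\rangle^{-1/2-\delta}Z^\mu u_k'\|_{L^2([0,T]\times\ext)},
\]
together with an analogous weighted $L^2_{t,x}$ bound for $u_k$ itself (see below). The goal is to prove $M_k(T)\le C_0\epsilon+C_1 M_{k-1}(T)^2$, which, for $\epsilon$ small, yields $M_k(T)\le 2C_0\epsilon$ uniformly in $k$ and $T$, and then a similar contraction estimate in a lower-order norm to pass to the limit.

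To produce the $L^2_t L^2_x$ piece I would apply the variable-coefficient localized energy estimate of \cite{MS} directly to the linearized operator $\Box-B^{\alpha\beta}\partial_\alpha\partial_\beta$, which is justified because $B$ is smallness-controlled by $M_{k-1}(T)$. For the highest-order vector-field energies I would commute $Z^\mu$ through the equation, absorbing the commutator terms with the constant-coefficient localized energy estimate of \cite{DMSZ,FW,HMSSZ} (handling both the bulk and the elliptic regularity needed to recover non-tangential derivatives near $\bdy$). Pointwise decay of the iterate is extracted from a Klainerman-Sogge type weighted Sobolev inequality, which gives $|Z^\mu u_k'(t,x)|\lesssim \langle t+|x|\rangle^{-(n-1)/2}$ times an $L^2$ norm of more vector fields; since $n\ge 5$, the rate $t^{-(n-1)/2}$ is integrable, so time integrals of the $A$ and $B$ terms close without logarithmic losses.

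The principal obstacle, and the feature that distinguishes this result from earlier exterior-domain work, is the presence of undifferentiated $u$ in $A$ and $B$. Derivative bounds alone cannot control $\|u\|$, so I would add to $M_k(T)$ a weighted $L^2_{t,x}$ term of the form $\sum_{|\mu|\le N}\|\langle x\rangle^{-3/2-\delta}Z^\mu u_k\|_{L^2([0,T]\times\ext)}$, which follows from the localized energy estimate for $u_k'$ through the Hardy inequality in $\ext$ (available since $u_k$ vanishes on $\bdy$ and $n\ge 3$). Iterating the Hardy inequality gains the extra power of $\langle x\rangle^{-1}$ needed to treat $u$ with the same weighted machinery used for $u'$, and combining this with the pointwise decay gives the quadratic estimate $C_1 M_{k-1}(T)^2$ for the $A$ and $B$ contributions. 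Once the uniform bound on $M_k(T)$ is established, standard arguments yield the unique global smooth solution.
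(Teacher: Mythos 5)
Your overall skeleton (Picard iteration, vector fields restricted to $\partial_\alpha$ and $\Omega_{ij}$, variable-coefficient localized energy estimates for the quasilinear part, spatial-decay Sobolev lemmas) matches the intended argument, but the mechanism you use to close the iteration contains a genuine error. You claim that a Klainerman--Sogge type weighted Sobolev inequality applied to your admissible fields yields $|Z^\mu u_k'(t,x)|\lesssim \langle t+|x|\rangle^{-(n-1)/2}$, and you close the time integrals because $t^{-(n-1)/2}$ is integrable for $n\ge 5$. That decay rate is not available from $\partial_\alpha$, $\Omega_{ij}$ and $L$ alone: the $\langle t+|x|\rangle^{-(n-1)/2}$ factor in the Klainerman--Sobolev inequality comes from the Lorentz boosts, which you have (correctly) excluded. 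Without boosts, the rotation/radial Sobolev lemma (Lemma \ref{lemmaWeightedSobolev} and its analogues) gives decay only in $|x|$, i.e.\ $\langle x\rangle^{-(n-1)/2}$, which is useless for $t$-integrability on a fixed compact set near $\partial\K$; obtaining genuine $t$-decay there is a local-energy-decay statement, not a Sobolev inequality. The way the argument actually closes is the opposite of what you propose: one gives up $t$-decay entirely, applies the spatial decay lemma dyadically in $|x|$, and uses Cauchy--Schwarz in \emph{both} $t$ and $x$ so that both factors are absorbed by spacetime-integrated localized energy norms, the dyadic sum converging because the total power of $R$ is negative.

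Once you run that corrected bookkeeping, your treatment of the undifferentiated $u$ also fails in the low dimensions $n=5$ (and is borderline at $n=6$). Hardy's inequality upgrades the $\langle x\rangle^{-1/2-\delta}$-weighted bound on $u'$ only to a $\langle x\rangle^{-3/2-\delta}$-weighted bound on $u$, and in the dyadic Schwarz argument this weight sits on the wrong side: converting $\|Z^\mu u\|_{L^2_t L^2(|x|\sim R)}$ back costs $R^{3/2+\delta}$, and against the decay $R^{-(n-1)/2}$ and the $R^{1/2+\delta}$ from the other factor the exponent is $-(n-1)/2+2+2\delta$, which does not sum for $n=5$. This is precisely why the paper controls $u$ itself with the much weaker weight $\langle x\rangle^{-1/2-2\delta}$, obtained not from Hardy but from the $p=2$ weighted Strichartz estimate \eqref{weightedStrichartz} (together with \eqref{cpctF} and \eqref{lindbladVariant}) applied to $(1-\rho)Z^\mu u_l$ after cutting off near the obstacle; with that weight the dyadic exponent becomes $-(n-1)/2+1+3\delta<0$ already in dimension $4$, and a fortiori for $n\ge 5$. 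In short: your route as written would close only in higher dimensions (consistent with the paper's remark that for $n\ge 7$ one can simply adapt \cite{MS}), while for $n=5,6$ you need the weighted Strichartz ingredient of \cite{DMSZ}, \cite{FW}, \cite{HMSSZ} rather than Hardy, and in all cases you must replace the claimed pointwise $t$-decay by the spatial-decay/localized-energy scheme. The inclusion of the scaling field $L$ is a further (minor) complication you would need to justify near $\partial\K$, since its coefficients grow linearly in $t$ there; the paper avoids it altogether.
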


When the $Q(u,u',u'')=Q(u',u'')$, such global existence results have
previously been established in \cite{MS, MS2} for $n\ge 4$.  The
geometrical restrictions on $\K$ in \cite{MS2} are much less strict.
When $n\ge 7$, the proof in \cite{MS} can easily be adapted to prove
the theorem.  We shall not discuss the $n=5,6$ result further as it
follows from easy modifications of the arguments of \cite{DMSZ} or of
those in the sequel.

With a general nonlinearity such as above, in dimension 4, one expects
almost global existence as in H\"ormander~\cite{Hormander} for the boundaryless
case, and this was indeed proved in \cite{DMSZ}.  Similarly, in
three dimensions, based on the boundaryless results of
Lindblad~\cite{Lindblad}, one expects a lifespan $T_\varepsilon \sim
\varepsilon^{-2}$, and this was proved for star-shaped obstacles in
\cite{DZ}.  

The proofs of \cite{Hormander} and \cite{Lindblad} for long time
existence in the boundaryless case show an improved lifespan in $n=3,4$ if the additional
restriction 
\begin{equation}
 \label{restriction}
(\partial_u^2 A)(0,0,0)=0
\end{equation}
is imposed.
With this additional restriction, the lifespan bounds which are proved
are comparable to those which were previously available when the
nonlinearity was not permitted to depend on $u$.  That is, in three
dimensions, solutions exist almost globally, and in four dimensions,
there is global existence.

The exterior domain analog of this four dimensional global existence
is the primary result of this article.
\begin{theorem}
  \label{thm1.2}
Let $\K\subset \R^4$ be a smooth, bounded, star-shaped
obstacle, and let $Q(u,u',u'')$ satisfy \eqref{restriction} in
addition to the assumptions of Theorem \ref{thm1.1}.  Assume further that
$(f,g)\in C^\infty(\extfour)$ vanish for $|x|>R$ for fixed $R$ and satisfy
the compatibility conditions to infinite order.  Then there is a
constant $\varepsilon_0$ and a positive integer $N$ so that if
$0<\varepsilon<\varepsilon_0$ and
\begin{equation}
  \label{dataSmallness2}
\sum_{|\mu|\le N+1} \|\partial_x^\mu f\|_{L^2(\extfour)} +
\sum_{|\mu|\le N} \|\partial_x^\mu g\|_{L^2(\extfour)}\le \varepsilon,
\end{equation}
then \eqref{generalEquation} has a unique global solution $u\in C^\infty([0,\infty)\times\extfour)$.
\end{theorem}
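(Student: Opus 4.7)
The plan is to adapt H\"ormander's proof \cite{Hormander} of four-dimensional global existence in the boundaryless case to exterior domains, combining it with the techniques of \cite{DMSZ} for the case $Q=Q(u',u'')$ and with the localized energy estimates of \cite{MS}, \cite{FW}, \cite{HMSSZ}. I would set up the standard iteration
\[
\Box u_{k+1}=A(u_k,u_k')+B^{\alpha\beta}(u_k,u_k')\partial_\alpha\partial_\beta u_{k+1},
\]
subject to the initial and Dirichlet boundary data of \eqref{generalEquation}. Because $\K$ forbids the Lorentz boosts $L_i=t\partial_i+x_i\partial_t$, the admissible collection of vector fields is $Z=\{\partial_\alpha,\Omega_{ij},S\}$ with the scaling $S=t\partial_t+r\partial_r$, and I would bootstrap on a quantity of the form
\[
M_N(T)=\sup_{0\le t\le T}\sum_{|\mu|\le N}\|Z^\mu u'(t,\cd)\|_{L^2(\extfour)}+\|\la x\ra^{-1/2-\delta}Z^{\le N}u'\|_{L^2([0,T]\times\extfour)}.
\]

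The hypothesis \eqref{restriction} plays the decisive role by killing the pure $u^2$ monomial in the Taylor expansion of $A$ at the origin, leaving
\[
A(u,u')=u'\cdot\widetilde A(u,u')+O\bigl((|u|+|u'|)^3\bigr),
\]
so every quadratic piece of the nonlinearity carries at least one derivative of $u$. The exterior-domain Klainerman--Sobolev inequality applied to the $L^2$ control of $Z^{\le N}u'$ then yields, for $|\mu|\le N-c$,
\[
|Z^\mu u(t,x)|+|Z^\mu u'(t,x)|\lesssim \varepsilon(1+t+|x|)^{-3/2}\bigl(1+\bigl|t-|x|\bigr|\bigr)^{-1/2},
\]
so that each $L^\infty$ factor extracted from the nonlinearity contributes an additional $(1+t)^{-3/2}$ of time decay, which renders the source of the energy inequality integrable in $t$. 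Without \eqref{restriction}, the $u^2$ contribution would produce only a $\log t$ loss and force a drop to the almost global result of \cite{DMSZ}; this is the whole mechanism behind the improvement from almost-global to global existence.

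The main obstacle, as in \cite{DMSZ}, is that $S$ does not commute with the Dirichlet condition on $\bdy$, so one cannot freely commute $Z^\mu$ through the boundary value problem. Following \cite{DMSZ} I would split spatially with a cutoff $\chi\in\Coi$ that is $1$ on $|x|\le 1$: near the obstacle, the constant coefficient localized energy estimate of \cite{DMSZ}, \cite{FW}, \cite{HMSSZ}, together with elliptic regularity and the local energy decay available for star-shaped (hence nontrapping) $\K$, dominates the bad commutators of $S$ with $\Box$ and with the boundary by the KSS norm appearing in $M_N$; away from the obstacle, $S$ and the other fields commute through $\Box$ and one runs the boundaryless four-dimensional argument of \cite{Hormander}. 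The variable coefficient localized energy estimate of \cite{MS} absorbs the quasilinear perturbation $B^{\alpha\beta}(u_k,u_k')\partial_\alpha\partial_\beta$ from the left-hand side, the required smallness $\|B(u_k,u_k')\|_\infty\ll 1$ being provided by the bootstrap on $M_N$. Closing this bootstrap for $\varepsilon$ sufficiently small, together with a standard contraction argument in a weaker norm, then produces the desired global solution.
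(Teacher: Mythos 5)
Your overall architecture (iteration, KSS-type bootstrap norm, cutoff near the obstacle, variable coefficient localized energy estimate for the quasilinear part) is in the spirit of the paper, but the decay mechanism you propose is not the paper's and, as written, it has a genuine gap. You include the scaling field $S=t\partial_t+r\partial_r$ and then claim a Klainerman--Sobolev bound $|Z^\mu u|+|Z^\mu u'|\lesssim \varepsilon(1+t+|x|)^{-3/2}(1+|t-|x||)^{-1/2}$ from $L^2$ control of $Z^{\le N}u'$. That pointwise rate is not available from the family $\{\partial_\alpha,\Omega_{ij},S\}$: the $(1+t+|x|)^{-3/2}$ factor requires the Lorentz boosts, which are exactly the fields excluded by the obstacle. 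Without boosts one gets at best weighted bounds of the form $\la x\ra^{-3/2}\la t-|x|\ra^{-1/2}$, i.e.\ no decay in $t$ in a bounded neighborhood of $\K$, so your key step --- ``each $L^\infty$ factor contributes $(1+t)^{-3/2}$, rendering the source integrable in $t$'' --- fails, and the bootstrap does not close. The scaling field itself is also problematic near $\bdy$: it does not preserve the Dirichlet condition and its coefficients grow in $t$, so the commutator/boundary terms are not dominated by the KSS norm in the way you assert; the paper deliberately uses only $\{\partial_\alpha,\Omega_{jk}\}$ and extracts decay solely in $|x|$ (Lemma \ref{lemmaWeightedSobolev}), trading time decay for space-time integrated norms.

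The second gap is that your bootstrap quantity controls only $u'$, whereas the whole point of the theorem (versus the earlier results for $Q=Q(u',u'')$) is the dependence of the nonlinearity on undifferentiated $u$. The paper closes this by adding to the norm weighted space-time $L^2$ control of $Z^\mu u$ itself (and a local $L^2_{t,x}$ norm of $u$ near the obstacle), and these are the terms that require the genuinely new estimates: the weighted Strichartz bound \eqref{weightedStrichartz} with $\gamma>0$ (no logarithmic loss in $T$, which is where hypothesis \eqref{restriction} pays off --- the surviving $u\,\partial u$ and $u\,\partial^2 u$ terms are rewritten in divergence form), the divergence-form variant \eqref{lindbladVariant}, and the compactly-supported-forcing estimate \eqref{cpctF} to absorb the cutoff commutator. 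Your proposal offers no substitute for this control of $u$: a Hardy inequality alone does not suffice to estimate the $u\,\partial^2 u$ contributions in $L^1_tL^2_x$ once the claimed $t$-decay is removed. So while the role you assign to \eqref{restriction} (eliminating the pure $u^2$ term) is correct, the mechanism by which it yields global rather than almost global existence here is the $\gamma>0$ weighted Strichartz/divergence-form argument in the spatial variable, not time decay, and that part of the proof is missing from your outline.
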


While we have only stated Theorem \ref{thm1.1} and Theorem
\ref{thm1.2} for scalar equations, it is not difficult to extend these
results to systems and even multiple speed systems.  We do not expect
the restriction to star-shaped obstacles to be optimal but impose this
largely for simplicity of exposition.  One might fully expect similar
results to hold in domains similar to those addressed in \cite{MS3},
that is, any domain for which there is a sufficiently rapid decay of
local energy.  

We also note here that we only examine the case of
Dirichlet boundary conditions.  While Neumann boundary conditions were
permitted in \cite{HMSSZ}, they are more difficult to handle for
quasilinear equations.  First of all, even proving energy estimates
for small perturbations of the d'Alembertian in the exterior domain
requires additional assumptions.  In particular, one either needs to
assume a nonlinear compatibility condition which is akin to what
appears in \cite{MSS} or more generally take the boundary
condition to regard the conormal derivative as in \cite{Koch}.
Moreover, with Dirichlet boundary conditions and a star shaped obstacle, the boundary
terms which appear in the localized energy estimates (see Proposition
\ref{kss.prop} and \cite{MS}) have a favorable sign.  This is no longer the case
with Neumann boundary conditions.  By developing techniques that would
allow more general obstacle geometries, it is possible that one may
also permit Neumann boundary conditions, but we do not explore that here.

In hopes of making the arguments more transparent, we shall truncate
the nonlinearity at the quadratic level.  Since we are dealing with
small amplitude solutions, the higher order terms are better behaved,
and it is clear how to alter the proofs in the sequel to permit these
terms.  With such a truncation, we may now focus on 
\begin{equation}
  \label{trucatedEquation}
\Box u = a^\alpha u \partial_\alpha u + b^{\alpha\beta}\partial_\alpha u \partial_\beta u + A^{\alpha\beta}
u\partial_\alpha\partial_\beta u + B^{\alpha\beta\gamma}\partial_\alpha u \partial_\beta\partial_\gamma u
\end{equation}
with Dirichlet boundary conditions and small data.

Our proof is based on two key estimates.  The first is a localized
energy estimate which, beginning with \cite{KSS}, has played a key
role in nearly every proof of long time existence for wave equations
in exterior domains.  In one of its simplest forms, it states
\begin{equation}
  \label{locEnergy1}
\|\la x\ra^{-1/2-} w'\|_{L^2_{t,x}([0,T]\times \R^n)}\lesssim
\|w'(0,\cd)\|_2 + \int_0^T \|\Box w(s,\cd)\|_2\:ds
\end{equation}
in $\R_+\times\R^n$, $n\ge 3$.  We shall also utilize a version of
this for perturbations of the d'Alembertian which is from \cite{MS}.

The second of the key estimates is a variant on this.  It can be
thought of as a generalization of the main new estimate used in
\cite{DMSZ}.  It is also the $p=2$ version of the weighted Strichartz
estimates of \cite{FW}, \cite{HMSSZ}.  In $\R_+\times \R^4$, this
states that
\begin{equation}
  \label{wtStrich}
\||x|^{-1/2-\gamma} w\|_{L^2_{t,x}} \lesssim
\|w'(0,\cd)\|_{\dot{H}^{\gamma-1}} + \||x|^{-1-\gamma} \Box
w\|_{L^1_tL^1_rL^2_\omega},\quad 0<\gamma<1/2.
\end{equation}
It is the $\gamma=0$ version of this estimate which was used in
\cite{DMSZ} to prove almost global existence in $\extfour$ when
\eqref{restriction} is not assumed.  When $\gamma=0$, there is a
logarithmic blow up in $t$ in the estimate, and this logarithm
corresponds precisely to the exponential in the lifespan bound.

\newsection{Main estimates on $\R_+\times \R^4$}
In this section, we gather the main boundaryless estimates.  These
estimates, for the most part, are not new.  In the sequel, we shall
apply a cutoff which vanishes near the boundary to the solution.  What
results solves a boundaryless wave equation to which these estimates
may be applied. 

The estimates which we explore here will be for linear wave equations
in $\R_+\times\R^4$.  We let $w$ solve
\begin{equation}
  \label{bdyless}
  \begin{cases}
    \Box w = F,\quad (t,x)\in \R_+\times\R^4,\\
    w(0,\cd)=w_0,\quad \partial_t w(0,\cd)=w_1.
  \end{cases}
\end{equation}

The first of these estimates is a localized energy estimate.  This has
become an increasingly standard tool in the study of nonlinear wave
equations.  In the next section, we shall also present a version of
this estimate which holds for perturbations of the d'Alembertian

\begin{proposition}
  \label{propLocalEnergy}
Let $w$ be a smooth solution to \eqref{bdyless} which vanishes for
large $|x|$ for each $t$.  Then, for any $T>0$, we have
\begin{equation}
  \label{locEnergy}
\|\la x\ra^{-1/2-} w'\|_{L^2_{t,x}([0,T]\times \R^4)} + \|\la
x\ra^{-3/2} w\|_{L^2_{t,x}([0,T]\times \R^4)} \lesssim
\|w'(0,\cd)\|_2 + \int_0^T \|\Box w(t,\cd)\|_2\:dt
\end{equation}
with constant independent of $T$.
\end{proposition}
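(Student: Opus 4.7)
The plan is to establish \eqref{locEnergy} by a Morawetz-type multiplier argument. I would pair $\Box w=F$ with
\[
Mw := f(r)\,\partial_r w + \tfrac{3}{2r}f(r)\, w,
\]
where $f\colon[0,\infty)\to[0,\infty)$ is a bounded, smooth, radial weight with $f'(r)\asymp\la r\ra^{-1-2\epsilon}$ for a small fixed $\epsilon>0$; a convenient choice is $f(r)=\int_0^r(1+s)^{-1-2\epsilon}\,ds$. Integrating by parts over the slab $[0,T]\times\R^4$, after writing $\Box$ in polar coordinates, yields an identity of the schematic form
\[
\text{BT}(T)-\text{BT}(0) + \int_0^T\!\!\int_{\R^4}\Bigl[\tfrac12 f'(r)|w'|^2 + \frac{a(r)}{r}|\nabla_\omega w|^2 + C_0\, G(r)\,|w|^2\Bigr]\,dx\,dt = \int_0^T\!\!\int_{\R^4} F\cdot Mw \,dx\,dt,
\]
where $\nabla_\omega$ denotes angular derivatives, $C_0=(n-1)(n-3)/4=3/4$ is the four-dimensional Morawetz constant, and the coefficients satisfy $a(r)\ge0$ and $G(r)\gtrsim \la r\ra^{-3-2\epsilon}$. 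The positive weight $f'(r)\gtrsim \la r\ra^{-1-2\epsilon}$ on $|w'|^2$ and the positive weight $G(r)\gtrsim\la r\ra^{-3-2\epsilon}$ on $|w|^2$ recover precisely the two spacetime norms appearing on the left-hand side of \eqref{locEnergy}.

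To close the estimate I would bound the boundary contribution $\text{BT}(T)-\text{BT}(0)$ by $\|w'\|_{L^\infty_t L^2_x}^2$, using $\|Mw\|_{L^2_x}\lesssim\|w'\|_{L^2_x}$; the $w/r$ piece of $Mw$ is absorbed by the four-dimensional Hardy inequality $\||x|^{-1}w\|_{L^2(\R^4)}\lesssim\|\nabla w\|_{L^2(\R^4)}$, which applies since $w$ has compact spatial support. The source pairing is estimated by
\[
\Bigl|\int_0^T\!\!\int_{\R^4} F\cdot Mw\,dx\,dt\Bigr|\le \|F\|_{L^1_t L^2_x}\,\|Mw\|_{L^\infty_t L^2_x}\lesssim \|F\|_{L^1_t L^2_x}\,\|w'\|_{L^\infty_t L^2_x},
\]
and combining this with the standard energy inequality $\|w'\|_{L^\infty_t L^2_x}\lesssim\|w'(0,\cd)\|_2+\|F\|_{L^1_t L^2_x}$ and a single Young-type absorption of the cross term produces \eqref{locEnergy}.

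The main technical obstacle is tuning the multiplier so that the bulk coefficients of $|\partial_t w|^2$, $|\partial_r w|^2$, $|\nabla_\omega w|^2$, and $|w|^2$ all come out \emph{simultaneously} nonnegative with the claimed weights. The $|w|^2$ piece is the delicate one: it rests on the positive Morawetz mass $C_0=3/4$, which is peculiar to $n=4$ (it vanishes in $n=3$ and would change sign in $n=2$), and is precisely what permits the improved $\la x\ra^{-3/2}$ decay on $w$ rather than only $\la x\ra^{-1/2-}$. A small correction to the $w$-coefficient of $Mw$, or an auxiliary multiplier $\alpha(r)w$, is typically needed to mop up unfavorable cross terms; these adjustments are by now standard and are implicit in the references \cite{KSS,MS,DMSZ}.
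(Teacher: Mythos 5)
Your multiplier is exactly the one the paper uses, $f(r)\partial_r w+\tfrac{3}{2}\tfrac{f(r)}{r}w$; the only difference is that the paper takes $f(r)=r/(r+R)$, proves the estimate on each dyadic torus $|x|\approx R$, and then sums over dyadic $R$, whereas your choice $f'(r)\asymp\la r\ra^{-1-2\epsilon}$ builds that dyadic summation into a single bounded weight. Your handling of the boundary terms and of the source (Cauchy--Schwarz in $x$, then the energy inequality and absorption) is also the same as what the paper describes, so in substance this is the paper's proof.

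One point, as written, falls short of the statement: you only claim $G(r)\gtrsim\la r\ra^{-3-2\epsilon}$ for the coefficient of $|w|^2$, which yields $\|\la x\ra^{-3/2-\epsilon}w\|_{L^2_{t,x}}$, while the second term in \eqref{locEnergy} carries the unweakened weight $\la x\ra^{-3/2}$ (only the $w'$ term is allowed the $\epsilon$-loss). Moreover you defer the positivity of the zeroth-order coefficient as the ``main technical obstacle'' and suggest an auxiliary multiplier may be needed; in fact with the exact multiplier above there are no leftover cross terms, and the bulk form is $\tfrac{f'}{2}\bigl((\partial_tw)^2+(\partial_rw)^2\bigr)+\bigl(\tfrac{f}{r}-\tfrac{f'}{2}\bigr)|\nabla_\omega w|^2-\tfrac34\Delta\bigl(\tfrac{f}{r}\bigr)w^2$, where in $\R^4$ one has $-\tfrac34\Delta\bigl(\tfrac{f}{r}\bigr)=\tfrac34\bigl(\tfrac{f-rf'}{r^3}-\tfrac{f''}{r}\bigr)$. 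For your $f$ this is in fact $\gtrsim\la r\ra^{-3}$ with no $\epsilon$-loss: $f''<0$ gives $-f''/r\gtrsim 1/r$ for $r\le 1$, and since $\tfrac{d}{dr}(f-rf')=-rf''>0$ one gets $f-rf'\ge f(1)-f'(1)>0$ for $r\ge 1$; also $f/r\ge f'\ge f'/2$ (as $f'$ is decreasing), so the angular coefficient is nonnegative. Making these two computations explicit closes the gap and recovers \eqref{locEnergy} exactly as stated, with constants depending only on $\epsilon$, i.e.\ on the ``$-$'' in $\la x\ra^{-1/2-}$.
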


This proposition, in fact, holds in any dimension $n\ge 3$, though in
$n=3$ the second term in the left side has a logarithmic divergence in $T$.
A proof can be found in \cite{MS}, though this
estimate did not originate there.  The interested reader can see the
references therein for a more complete history.  To prove the
estimate, one uses a positive commutator
argument with multiplier $f(r)\partial_r w +
\frac{3}{2}\frac{f(r)}{r}w$ where $f(r)=\frac{r}{r+R}$ to get the
estimate in a torus with radii $\approx R$.  Summing over such dyadic
radii yields the proposition.

The second estimate is from \cite{FW} and \cite{HMSSZ}.
\begin{proposition}
  \label{propWeightedStrichartz}
Let $w$ be a smooth solution to \eqref{bdyless}.  Then, for
$0<\gamma<\frac{1}{2}$, we have 
\begin{equation}
  \label{weightedStrichartz}
\||x|^{-\frac{1}{2}-\gamma} w\|_{L^2_{t,x}(\R_+\times\R^4)} \lesssim
\|w_0\|_{\dot{H}^\gamma(\R^4)} + \|w_1\|_{\dot{H}^{\gamma-1}(\R^4)} +
\||x|^{-1-\gamma} F\|_{L^1_tL^1_rL^2_\omega(\R_+\times\R^4)}.
\end{equation}
\end{proposition}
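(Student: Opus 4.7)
The proof proceeds in two stages: a Duhamel reduction of the full estimate to the homogeneous case, and a homogeneous weighted Strichartz inequality proved via $TT^\ast$ and a Stein--Weiss computation on spherical harmonics.

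\textbf{Duhamel reduction.} Split $w = w_h + w_F$, where $w_h$ is the free solution with data $(w_0, w_1)$ and $w_F(t, x) = \int_0^t \sin((t-s)\sqrt{-\Delta})(\sqrt{-\Delta})^{-1} F(s, \cdot)\,ds$. Minkowski's inequality in $s$, followed by the homogeneous estimate applied to the free solution with Cauchy data $(0, F(s, \cdot))$, gives
\begin{equation*}
\bigl\||x|^{-1/2-\gamma} w_F\bigr\|_{L^2_{t,x}} \lesssim \int_0^\infty \|F(s, \cdot)\|_{\dot H^{\gamma - 1}(\R^4)}\,ds.
\end{equation*}
To complete the reduction one invokes the weighted Hardy--Sobolev inequality
\begin{equation*}
\|G\|_{\dot H^{\gamma-1}(\R^4)} \lesssim \bigl\||x|^{-1-\gamma} G\bigr\|_{L^1_r L^2_\omega},
\end{equation*}
whose dual form $r^{1+\gamma} \|H(r\,\cdot)\|_{L^2(S^3)} \lesssim \|H\|_{\dot H^{1-\gamma}(\R^4)}$ follows from the trace theorem for $\dot H^{1-\gamma}(\R^4)$ onto $S^3_r$ combined with scaling. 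The restriction $\gamma < 1/2$ enters here: it is exactly what is needed so that $1-\gamma > 1/2$, as required for the trace.

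\textbf{Homogeneous estimate.} By $TT^\ast$, it suffices to prove $\||x|^{-1/2-\gamma} e^{\pm it\sqrt{-\Delta}} f\|_{L^2_{t,x}} \lesssim \|f\|_{\dot H^\gamma(\R^4)}$. Decompose $f$ in spherical harmonics, $f(r\omega) = \sum_{k,l} f_{k,l}(r) Y_{k,l}(\omega)$, and apply the Hankel transform of order $k+1$ on each angular sector to diagonalize $-\Delta$. The inequality reduces, uniformly in $(k,l)$, to the $L^2(r^{2-2\gamma}\,dr)$--boundedness of the integral operator with kernel
\begin{equation*}
K_k(r, r') = \int_0^\infty J_{k+1}(r\rho)\,J_{k+1}(r'\rho)\,\rho^{1-2\gamma}\,d\rho.
\end{equation*}
The Weber--Schafheitlin formula evaluates $K_k$ in closed form (as a Gauss hypergeometric expression homogeneous of degree $2\gamma-2$ in $(r, r')$), and a one-dimensional Stein--Weiss fractional integration inequality on the half-line, applied to this kernel, yields the desired bound.

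\textbf{Main obstacle.} The principal technical challenge is to obtain the Stein--Weiss bound for $K_k$ with a constant uniform in the spherical harmonic index $k$, since the closed form involves ratios of Gamma functions in $k$ whose boundedness as $k \to \infty$ must be verified by careful asymptotic analysis. The restriction $0 < \gamma < 1/2$ is sharp at both endpoints: at $\gamma = 0$ the Bessel integral defining $K_k$ only converges logarithmically and the $k$-summation introduces a loss, while at $\gamma = 1/2$ the trace inequality in the Duhamel reduction breaks down.
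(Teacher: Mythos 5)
Your Duhamel reduction and the treatment of the forcing term are correct and are essentially what the paper intends: the inhomogeneous term is handled by Minkowski's inequality plus the homogeneous estimate with data $(0,F(s,\cdot))$, followed by the dual of the trace lemma $r^{1+\gamma}\|H(r\cdot)\|_{L^2(S^3)}\lesssim\|H\|_{\dot H^{1-\gamma}(\R^4)}$ (valid precisely because $1/2<1-\gamma<2$), which converts $\|F(s,\cdot)\|_{\dot H^{\gamma-1}}$ into the weighted $L^1_rL^2_\omega$ norm. This is the "dual estimate to the trace lemma" the paper refers to, and your identification of where $\gamma<1/2$ enters is right.

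The homogeneous part, however, contains a genuine gap. The reduction you state -- that the estimate $\||x|^{-1/2-\gamma}e^{\pm it\sqrt{-\Delta}}f\|_{L^2_{t,x}}\lesssim\|f\|_{\dot H^\gamma}$ follows from weighted $L^2(dr)$ bounds, uniform in $k$, for the operator with kernel $K_k(r,r')=\int_0^\infty J_{k+1}(r\rho)J_{k+1}(r'\rho)\rho^{1-2\gamma}\,d\rho$ -- is not a correct reduction of the space--time estimate. That kernel is (up to factors of $(rr')^{-1}$) the kernel of $|D|^{-2\gamma}$ on the $k$-th angular sector: the time variable has disappeared and the frequency $\rho$ has been integrated out with the weight $\rho^{1-2\gamma}$, so the bound you would prove is the classical Stein--Weiss inequality $\||x|^{-1/2-\gamma}|D|^{-2\gamma}|x|^{-1/2-\gamma}\|_{L^2\to L^2}<\infty$, which carries no time integration and cannot by itself yield an $L^2_{t}$ bound; nothing in that statement uses the dispersive oscillation $e^{it\rho}$. (A symptom of the mismatch: for $0<\gamma<1/2$ the integrand behaves like $\rho^{-2\gamma}$ at infinity on the diagonal $r=r'$, so the integral defining $K_k$ is not even absolutely convergent in your stated range; absolute convergence on the diagonal would require $\gamma>1/2$.) The correct way to run your Hankel-transform argument is to apply Plancherel in $t$ first: this collapses the $\rho$-integration to a fixed time-frequency $\tau$, the spectral measure of $\sqrt{-\Delta}$ on each sector is rank one in $r$, and the quantity that must be bounded uniformly in $k$ is the diagonal integral $\int_0^\infty J_{k+1}(s)^2 s^{-2\gamma}\,ds$, which by the Weber--Schafheitlin formula equals a ratio of Gamma functions of size $\approx k^{-2\gamma}$; this is where your Gamma-asymptotics concern legitimately enters, and with this correction your angular-decomposition route (close in spirit to the Fang--Wang reference) does go through. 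Note that the paper itself proves the homogeneous bound differently, by interpolating a trace lemma on spheres with a Plancherel-in-$t$ variant of the localized energy estimate, avoiding Bessel-function computations altogether.
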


This proposition follows in the homogeneous case by interpolating a
trace lemma (on a sphere) and a variant of the localized energy estimate which
follows by applying Plancherel's theorem in the $t$-variable.  The
inhomogeneous estimate follows, in turn, by using the dual estimate to
the trace lemma which was applied.  This yields a much wider class of
estimates, which were dubbed weighted Strichartz estimates, for which
we have only stated the $p=2$ case.

The $\gamma=0$ variant of this estimate, which involves a logarithmic
blow-up in $T$ when applied on $[0,T]\times\R^4$, laid at the heart of
the proof of the four dimensional almost global analog of Theorem
\ref{thm1.1} in \cite{DMSZ}.  Here one alters \eqref{locEnergy} by
applying it to the Reisz tranforms of the solution.  A weighted
variant of Sobolev's lemma is applied to the resulting Sobolev norm
with negative index which contains the forcing term.

To obtain a boundaryless wave equation, one applies a cutoff to the
solution in the exterior domain.  In order to handle the resulting
commutator term, we
shall require a variant of these estimates that permits the forcing
term to be taken in $L^2_t$ provided that it is compactly supported in
the spatial variable, uniformly in $t$.  
\begin{proposition}
 \label{propCpctF}
Let $w$ be a smooth solution to \eqref{bdyless} with vanishing data ($w_0=w_1=0$).
Suppose that $F(t,x)=0$ for $|x|>2$.  Then,
\begin{equation}
  \label{cpctF}
\|\la x\ra^{-1/2-} w\|_{L^2_{t,x}([0,T]\times \R^4)}\lesssim
\|F\|_{L^2_{t,x}([0,T]\times \R^4)}.
\end{equation}
\end{proposition}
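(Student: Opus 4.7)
My approach is to pass to the Fourier side in $t$ and reduce Proposition~\ref{propCpctF} to a uniform-in-frequency bound for the outgoing resolvent of $-\Delta$ on $\R^4$, exploiting the hypothesis that $F(t,\cd)$ is supported in $\{|x|\le 2\}$ to render a $\la x\ra^{1/2+\epsilon}$ weight on the forcing side harmless.

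\textbf{Reduction.} First I would extend $F$ by zero for $t\notin[0,T]$ and let $w$ be the retarded solution of $\Box w=F$ on $\R\times\R^4$, which agrees with the original $w$ on $[0,T]$ by causality. Taking the Fourier transform in $t$ gives $\hat w(\tau,\cd)=R(\tau+i0)\hat F(\tau,\cd)$, where $R(\tau+i0)=(-\Delta-(\tau+i0)^2)^{-1}$ is the outgoing resolvent, so Plancherel in $t$ gives
\[
\|\la x\ra^{-1/2-\epsilon}w\|_{L^2_{t,x}}^2=\int_{\R}\|\la x\ra^{-1/2-\epsilon}R(\tau+i0)\hat F(\tau,\cd)\|_{L^2_x}^2\,d\tau.
\]
Since $\hat F(\tau,\cd)$ is supported in $\{|x|\le 2\}$ for every $\tau$, $\|\la x\ra^{1/2+\epsilon}\hat F(\tau,\cd)\|_{L^2_x}\lesssim\|\hat F(\tau,\cd)\|_{L^2_x}$, and another application of Plancherel to the right side of \eqref{cpctF} reduces matters to proving
\[
\sup_{\tau\in\R}\|\la x\ra^{-1/2-\epsilon}R(\tau+i0)\la x\ra^{-1/2-\epsilon}\|_{L^2(\R^4)\to L^2(\R^4)}<\infty.
\]

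\textbf{Resolvent estimate and main obstacle.} For $|\tau|\ge 1$ this is the Plancherel-in-$t$ incarnation of the positive commutator argument from \cite{MS} used to prove Proposition~\ref{propLocalEnergy}: with the same multiplier $f(r)\partial_r+\tfrac{3}{2}f(r)/r$, $f(r)=r/(r+R)$, summed dyadically over $R$, one obtains for $u=R(\tau+i0)h$, solving $(-\Delta-(\tau+i0)^2)u=h$, the bound
\[
\|\la x\ra^{-1/2-\epsilon}\nabla u\|_{L^2_x}+|\tau|\,\|\la x\ra^{-1/2-\epsilon}u\|_{L^2_x}\lesssim\|\la x\ra^{1/2+\epsilon}h\|_{L^2_x},
\]
which gives the desired uniform resolvent bound once one divides by $|\tau|$. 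For $|\tau|\le 1$ I would invoke Agmon's limiting absorption principle: since $-\Delta$ has neither zero eigenvalue nor zero resonance in $\R^n$, $n\ge 3$, the operator family $\la x\ra^{-1/2-\epsilon}R(\tau+i0)\la x\ra^{-1/2-\epsilon}$ extends continuously to $\tau=0$ as a bounded operator on $L^2(\R^4)$ and is uniformly bounded on $[-1,1]$. The delicate step is precisely this low-frequency uniformity: running Proposition~\ref{propLocalEnergy} directly against $F\in L^2_{t,x}$ costs a factor $T^{1/2}$ from Cauchy--Schwarz, while Proposition~\ref{propWeightedStrichartz}'s forcing norm $\||x|^{-1-\gamma}F\|_{L^1_tL^1_rL^2_\omega}$ is not controlled by $\|F\|_{L^2_{t,x}}$ because $|x|^{-1-\gamma}$ is non-integrable at the origin in four dimensions; neither of the two propositions of the section is, by itself, strong enough near $\tau=0$, and one must separately appeal to the low-energy resolvent analysis of the Laplacian specific to $n\ge 3$.
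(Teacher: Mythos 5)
Your Fourier-in-$t$/resolvent strategy can in principle be made to work, but as written it has a genuine gap at low frequencies, and the gap is created by your own reduction. When you replace the hypothesis $\supp \hat F(\tau,\cd)\subset\{|x|\le 2\}$ by the weaker statement $\|\la x\ra^{1/2+\epsilon}\hat F(\tau,\cd)\|_{L^2}\lesssim\|\hat F(\tau,\cd)\|_{L^2}$, you commit yourself to proving the symmetric-weight bound $\sup_\tau\|\la x\ra^{-1/2-\epsilon}R(\tau+i0)\la x\ra^{-1/2-\epsilon}\|_{L^2\to L^2}<\infty$, and this is false. At $\tau=0$ the limiting operator is $\la x\ra^{-1/2-\epsilon}(-\Delta)^{-1}\la x\ra^{-1/2-\epsilon}$, with kernel comparable to $\la x\ra^{-1/2-\epsilon}|x-y|^{-2}\la y\ra^{-1/2-\epsilon}$; testing on $f_R=R^{-2}\chi_{\{|y|\sim R\}}$ gives $\|Tf_R\|_{L^2}\gtrsim R^{1-2\epsilon}\to\infty$, so this operator is unbounded on $L^2(\R^4)$, and correspondingly the weighted resolvent norm blows up like a negative power of $|\tau|$ as $\tau\to0$. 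Agmon's limiting absorption principle with weights $\la x\ra^{-s}$, $s>1/2$, is uniform only on compact subsets of $(0,\infty)$; uniformity at the threshold requires roughly $s>1$, and the absence of a zero eigenvalue or resonance for the free Laplacian does not compensate for the weak weight $s=1/2+\epsilon$. The repair is simply not to give away the support condition: what you actually need is $\sup_{|\tau|\le 1}\|\la x\ra^{-1/2-\epsilon}R(\tau+i0)\chi_{\{|x|\le 2\}}\|_{L^2\to L^2}<\infty$, which does hold, e.g.\ by splitting near and far from the diagonal and using the kernel bounds $|R(\tau+i0)(x,y)|\lesssim|x-y|^{-2}$ for $|\tau||x-y|\lesssim1$ and $\lesssim|\tau|^{1/2}|x-y|^{-3/2}$ for $|\tau||x-y|\gtrsim1$ (the far part is then uniformly Hilbert--Schmidt, the near part is a locally integrable kernel). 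With that correction, and with the standard $\tau+i\varepsilon$ regularization to justify applying Plancherel to the retarded solution, your high-frequency step (the $O(1/|\tau|)$ uniform resolvent bound from the Morawetz-type multiplier) is fine, and you correctly identified that neither \eqref{locEnergy} nor \eqref{weightedStrichartz} alone closes the low-frequency case.

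For comparison, the paper stays entirely in physical space: it first proves the analogue of \eqref{cpctF} with $w'$ on the left by rerunning the positive commutator proof of \eqref{locEnergy}, applying the Schwarz inequality in both $t$ and $x$ with a weight chosen so that the multiplier term can be absorbed into the left side (this is where the $T^{1/2}$ loss you worried about is avoided); it then recovers the estimate for $w$ itself by applying this bound to $\partial_j(\Delta^{-1}\partial_j w)$, noting that $\Delta^{-1}\partial_j F$ is no longer compactly supported but has kernel $O(|x-y|^{-3})$, which supplies exactly the extra spatial decay of the forcing that your cutoff $\chi_{\{|x|\le2\}}$ provides on the resolvent side. Both arguments exploit the same gain --- decay of the forcing in $|x|$ beyond $\la x\ra^{-1/2-\epsilon}$ --- which is precisely what your reduction discarded.
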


This proposition is from \cite{DMSZ} and in fact holds for $n\ge 3$.
The techniques described below, however, only work for $n\ge 4$.  The
$n=3$ case was presented in \cite{DZ} using ideas from \cite{KSS}.
When $w$ is replaced by $w'$ in the left side, this estimate follows
from the proof of \eqref{locEnergy} described above.  Rather than
applying the Schwarz inequality in $x$ and bounding the multiplier term using
the energy inequality, one instead applies Schwarz in $t$ and $x$
while appropriately introducing a weight so that the multiplier term
can be bootstrapped into the left side of the estimate.  In order to
obtain \eqref{cpctF}, we apply this to $\partial_j
(\Delta^{-1}\partial_j w)$.  Applying $\Delta^{-1}\partial_j$ to $F$
does not maintain the compact support, but the kernel is
$O(|x-y|^{-3})$ which remains sufficient to absorb the weight which
was introduced.

We will utilize one additional variant of the localized energy
estimates.  This one is obtained using techniques akin to those which
appeared in \cite{Hormander} and \cite{Lindblad}.
\begin{proposition}
  \label{propLindbladVariant}
Let $v$ be a smooth solution to 
\begin{equation}\label{divEquation}
  \begin{cases}
    \Box v=\sum_0^4 a_j \partial_j G,\quad (t,x)\in \R_+\times\R^4,\\
    v(0,\cd)=\partial_t v(0,\cd)=0.
  \end{cases}
\end{equation}
Then, 
\begin{equation}
  \label{lindbladVariant}
\|\la x\ra^{-1/2-\delta} v\|_{L^2_{t,x}([0,T]\times\R^4)} \lesssim
\|G(0,\cd)\|_{\dot{H}^{\delta-1}} + \int_0^T \|G(t,\cd)\|_2\:dt
\end{equation}
for $0<\delta<1/2$.
\end{proposition}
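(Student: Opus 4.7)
The plan is to execute the standard H\"ormander--Lindblad trick of shifting the $\partial_j$ from $G$ onto an auxiliary wave, and then to combine Propositions \ref{propLocalEnergy} and \ref{propWeightedStrichartz}. First I would let $W$ be the solution of $\Box W = G$ on $\R_+\times\R^4$ with zero Cauchy data, and set $\tilde v = \sum_{j=0}^4 a_j\partial_j W$. Since the $a_j$ are constants, $\Box\tilde v = \sum_j a_j\partial_j G$, so $\tilde v$ satisfies the same equation as $v$. The Cauchy data disagree only through the time component: $W(0,\cd)=\partial_t W(0,\cd)=0$ together with $\partial_t^2W(0,\cd) = \Delta W(0,\cd)+G(0,\cd) = G(0,\cd)$ gives $\tilde v(0,\cd)=0$ and $\partial_t\tilde v(0,\cd)=a_0G(0,\cd)$. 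Writing $v_0 = v-\tilde v$ therefore produces a free wave with Cauchy data $(0,-a_0G(0,\cd))$, and it suffices to estimate $\tilde v$ and $v_0$ separately.

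The two pieces are controlled by the two preceding propositions. Since $\tilde v$ is a linear combination of components of $W'$, applying Proposition \ref{propLocalEnergy} to $W$ and using the domination $\la x\ra^{-1/2-\delta}\lesssim \la x\ra^{-1/2-}$ valid for any fixed $\delta>0$ yields
\begin{equation*}
\|\la x\ra^{-1/2-\delta}\tilde v\|_{L^2_{t,x}([0,T]\times\R^4)} \lesssim \|W'(0,\cd)\|_2 + \int_0^T\|G(t,\cd)\|_2\,dt = \int_0^T\|G(t,\cd)\|_2\,dt.
\end{equation*}
For $v_0$, Proposition \ref{propWeightedStrichartz} applied with $\gamma=\delta$ and vanishing forcing gives
\begin{equation*}
\||x|^{-1/2-\delta}v_0\|_{L^2_{t,x}(\R_+\times\R^4)} \lesssim |a_0|\,\|G(0,\cd)\|_{\dot H^{\delta-1}},
\end{equation*}
and the pointwise bound $\la x\ra^{-1/2-\delta}\lesssim |x|^{-1/2-\delta}$ recovers the left-hand weight in \eqref{lindbladVariant}. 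The triangle inequality assembles these into the desired estimate.

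The only real subtlety is the bookkeeping of the Cauchy data: the sole obstruction to the direct identification $v=\sum_j a_j\partial_j W$ is the datum $a_0G(0,\cd)$ created by differentiating $W$ in time, and this is precisely what the $\dot H^{\delta-1}$ term on the right of \eqref{lindbladVariant} is engineered to absorb. Consequently, Proposition \ref{propWeightedStrichartz} must be invoked at the positive parameter $\gamma=\delta$; the endpoint $\gamma=0$ estimate used for the almost-global result of \cite{DMSZ} would produce a $\log T$ loss and would not suffice for the global theorem being assembled here.
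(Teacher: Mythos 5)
Your argument is correct and is essentially the paper's own proof: you decompose $v$ into $\sum_j a_j\partial_j W$ (with $\Box W=G$, zero data) plus a free wave correcting the datum $a_0G(0,\cd)$, then apply Proposition \ref{propLocalEnergy} to the first piece and Proposition \ref{propWeightedStrichartz} with $\gamma=\delta$ to the second, exactly as in the text. The only difference is cosmetic bookkeeping of the sign of the free-wave datum.
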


\begin{proof}
  We let $v_1$ solve $\Box v_1=G$ with vanishing data, and let $v_0$
  solve the homogeneous equation $\Box v_0=0$ with $v_0(0,\cd)=0$,
  $\partial_t v_0(0,\cd)=G(0,\cd)$.  Then, 
\[ v=\sum_0^4 a_j \partial_j v_1 - a_0 v_0.\]
Thus,
\[ \|\la x\ra^{-1/2-\delta} v\|_{L^2_{t,x}} \lesssim \|\la
x\ra^{-1/2-\delta}v_1'\|_{L^2_{t,x}} + \|\la
x\ra^{-1/2-\delta}v_0\|_{L^2_{t,x}}.\]
For the first term in the right side, we simply apply
\eqref{locEnergy}, and to the second term we apply \eqref{weightedStrichartz}.
\end{proof}

We end this section with our principal source of decay.  Here, as was
initiated in \cite{KSS}, we use the localized energy estimates so as
to obtain long time existence from decay in the $|x|$ variable rather
than decay in the $t$ variable, which is more standard in the
boundaryless case but much more difficult to prove when there is a
boundary.  The decay that we obtain is based on the vector fields that
generate translations and rotations.  To that end, we set
\[\{\Omega\} = \{\Omega_{jk}=x_j\partial_k - x_k\partial_j\},\quad 1\le
j<k\le 4\]
and
\[\{Z\} = \{\partial_\alpha, \Omega_{jk}\},\quad 0\le \alpha\le 4, \, 1\le
j<k\le 4.\]

\begin{lemma}
  \label{lemmaWeightedSobolev}
For $h\in C^\infty(\R^4)$ and $R>1$, 
\begin{equation}\label{weightedSobolev}
  \|h\|_{L^\infty(\{|x|\in [R,2R]\})} \lesssim R^{-3/2}
  \sum_{|\mu|\le 2, j\le 1} \|\Omega^\mu \nabla_r^j
  h\|_{L^2(\{|x|\in [R/2,4R]\})}.
\end{equation}
\end{lemma}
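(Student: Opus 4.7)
The plan is to reduce to Sobolev embedding on the $3$-sphere and then promote the resulting fixed-radius bound to the annulus by a one-dimensional Sobolev argument in $r$. The main subtlety is extracting the precise power $R^{-3/2}$, which forces one to work with the induced surface measure on each sphere $\{|x|=r\}$ rather than the normalized measure on $S^3$; the $r^{-3}$ factor produced by this switch is exactly what gives $r^{-3/2}$ after a square root.

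For the first step, since $\dim S^3=3$, the Sobolev embedding $H^2(S^3)\hookrightarrow L^\infty(S^3)$ holds. The fields $\Omega_{jk}$ restrict on each sphere $\{|x|=r\}$ to the standard rotation fields on $S^3$, so for each fixed $r$,
\[
\|h\|_{L^\infty(\{|x|=r\})}^2 \lesssim \sum_{|\mu|\le 2}\|\Omega^\mu h\|_{L^2(S^3)}^2
= r^{-3}G(r), \qquad G(r) := \sum_{|\mu|\le 2}\|\Omega^\mu h\|_{L^2(\{|x|=r\},\,dS_r)}^2,
\]
where $dS_r$ is the surface measure. Consequently
\[
\|h\|_{L^\infty(\{|x|\in[R,2R]\})}^2 \lesssim R^{-3}\sup_{r\in[R,2R]}G(r).
\]

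For the second step, I control $\sup_{r\in[R,2R]}G(r)$ by a one-dimensional Sobolev argument. From $G(r_0)\le G(r_1)+\int_{R/2}^{4R}|G'(s)|\,ds$, averaging $r_1$ over $[R/2,4R]$, and using that $d^4x=dr\,dS_r$ identifies $\int_{R/2}^{4R}G(r)\,dr$ with $\sum_{|\mu|\le 2}\|\Omega^\mu h\|_{L^2(\{|x|\in[R/2,4R]\})}^2$, I obtain
\[
\sup_{r\in[R,2R]}G(r)\lesssim R^{-1}\sum_{|\mu|\le 2}\|\Omega^\mu h\|^2_{L^2(\{|x|\in[R/2,4R]\})}+\int_{R/2}^{4R}|G'(r)|\,dr.
\]
Differentiating through the surface element produces a $3r^{-1}G(r)$ piece together with cross terms $\int \Omega^\mu h\,\Omega^\mu\nabla_r h\,dS_r$; Cauchy--Schwarz and AM--GM, together with the assumption $R>1$, give $|G'(r)|\lesssim G(r)+\sum_{|\mu|\le 2}\|\Omega^\mu\nabla_r h\|^2_{L^2(\{|x|=r\},\,dS_r)}$, and integrating in $r$ recovers the corresponding $L^2$ norms over the thickened annulus.

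Combining the two steps yields
\[
\|h\|_{L^\infty(\{|x|\in[R,2R]\})}^2 \lesssim R^{-3}\sum_{|\mu|\le 2,\,j\le 1}\|\Omega^\mu\nabla_r^j h\|^2_{L^2(\{|x|\in[R/2,4R]\})},
\]
and taking square roots (using $\sqrt{\sum a_i^2}\le\sum|a_i|$) gives the stated inequality. I do not anticipate any serious obstacle: the only point requiring any care is the $3r^{-1}G$ contribution from differentiating the $r^3$ surface element, which is absorbed into $G(r)$ using $R>1$.
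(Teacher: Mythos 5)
Your proposal is correct and is essentially the paper's own argument made explicit: the paper proves the lemma by localizing to the annulus and applying Sobolev embedding on $\R\times S^3$, with the $R^{-3/2}$ gain coming from comparing $dr\,d\omega$ to the polar volume element $r^3\,dr\,d\omega$, and your slice-wise $H^2(S^3)\hookrightarrow L^\infty$ bound combined with the radial fundamental-theorem/averaging step is exactly that anisotropic embedding (two angular, one radial derivative), with the same source of decay.
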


This lemma is proved by apply Sobolev embeddings on $\R\times S^3$
after localizing to the annulus.  The decay results from the
difference in the volume element for $\R\times S^3$ versus that of
$\R^4$ in polar coordinates.  See \cite{Klainerman}.

\newsection{Main estimates on $\R_+\times\extfour$}
The main estimate here is a localized energy estimate which holds for
perturbations of the d'Alembertian.
The technique of proof
described in the previous section continues to hold.  The boundary
term that arises due to the obstacle, thanks to the star-shapedness
assumption, has a favorable sign and can simply be dropped.  

In order to handle the highest order terms of the quasilinear
equation, it is beneficial to have an analog of the localized energy
estimate for perturbations of the d'Alembertian.  We suppose that
$\phi\in C^\infty(\R_+\times\extfour)$ solves 
\begin{equation}
  \label{perturbedEquation}
\begin{cases}
\Box_h\phi = F,\quad (t,x)\in \R_+\times\extfour,\\
\phi|_\bdy=0,\\
\phi(0,\cd)=f,\quad \partial_t\phi(0,\cd)=g\end{cases}
\end{equation}
where $f,g$ are smooth, supported in $|x|<R$, and satisfy the
smallness condition \eqref{dataSmallness2}.  Here
\[\Box_h \phi = (\partial_t^2 - \Delta)\phi + 
h^{\alpha\beta}(t,x)\partial_\alpha\partial_\beta \phi,\]
and
\begin{equation}\label{sym}h^{\alpha\beta}(t,x)=h^{\beta\alpha}(t,x)\end{equation}
as well as
\begin{equation}\label{small}|h|=\sum_{\alpha,\beta=0}^4 |h^{\alpha\beta}(t,x)|\le
  \delta \ll 1.\end{equation}
We shall also utilize the notation
\[|\partial h|=\sum_{\alpha,\beta,\gamma=0}^4 |\partial_\gamma
h^{\alpha\beta}(t,x)|\]
and
\[S_T^\K = [0,T]\times \extfour.\]

For such a $\phi$, we
have the following estimates from \cite{MS}.
\begin{proposition}\label{kss.prop}
Suppose that $\K\subset\{|x|<1\}$ is a smooth, bounded, star-shaped obstacle as
above.  Let $\phi\in C^\infty(\R_+\times\extfour)$ solve
\eqref{perturbedEquation} with data satisfying \eqref{dataSmallness2}
and the compatibility conditions.  Suppose further that 
\[\sum_{|\mu|\le N} \|\partial^\mu \Box
\phi(0,\cd)\|_{L^2(\extfour)}\le C\varepsilon.\]  
Suppose that $h^{\alpha\beta}$ satisfies
\eqref{sym} and \eqref{small} for a sufficiently small choice of
$\delta$.  Then for any nonnegative integer $N$ and for any $T>0$,
\begin{multline}
  \label{kssext1}
\sum_{|\mu|\le N} \|\la x\ra^{-1/2-}
  \partial^\mu \phi'\|_{L^2(S_T^\K)} 
+ \sum_{|\mu|\le N} \|\la
  x\ra^{-3/2} \partial^\mu \phi\|_{L^2(S_T^\K)} + \sum_{|\mu|\le N}
  \|\partial^\mu \phi'(T,\cd)\|_{L^2(\extfour)}
\\\lesssim \varepsilon + \sum_{j\le N} \int_0^T \|\Box_h \partial_t^j
\phi(t,\cd)\|_{L^2(\extfour)}\:dt
+ \sum_{j\le N} \int_0^T \Bigl\|\Bigl(|\partial h|+
  \frac{|h|}{r}\Bigr)|\nabla \partial^j_t\phi|\Bigr\|_{L^2(\extfour)}\:dt
\\+ \sum_{|\mu|\le N-1} \|\Box\partial^\mu \phi\|_{L^2(S_T^\K)} +
\sum_{|\mu|\le N-1} \|\Box \partial^\mu \phi(T,\cd)\|_{L^2(\extfour)},
\end{multline}
and
\begin{multline}
  \label{kssext2}
\sum_{|\mu|\le N} \|\la x\ra^{-1/2-} Z^\mu \phi'\|_{L^2(S^\K_T)} +
\sum_{|\mu|\le N} \|\la x\ra^{-3/2} Z^\mu \phi\|_{L^2(S^\K_T)} +
\sum_{|\mu|\le N} \|Z^\mu \phi'(T,\cd)\|_{L^2(\extfour)}
\\\lesssim \varepsilon + \sum_{|\mu|\le N} \int_0^T \|\Box_h Z^\mu
\phi(t,\cd)\|_2\:dt
+\sum_{|\mu|\le N} \int_0^T \Bigl\|\Bigl(|\partial h|+
  \frac{|h|}{r}\Bigr) |\nabla Z^\mu\phi|\Bigr\|_{L^2(\extfour)}\:dt
\\+ \sum_{|\mu|\le N+1} \|\partial^\mu_x \phi'\|_{L^2([0,T]\times \{|x|<1\})}.
\end{multline}
\end{proposition}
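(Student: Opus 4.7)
The plan is to establish \eqref{kssext1} and \eqref{kssext2} by extending the positive-commutator argument sketched after Proposition \ref{propLocalEnergy} to the exterior domain and then propagating the base estimate through derivatives and vector fields.

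First I would prove the $N=0$ case of \eqref{kssext1}. Multiply $\Box_h\phi=F$ by $f(r)\partial_r\phi+\tfrac{3}{2}\tfrac{f(r)}{r}\phi$ with $f(r)=r/(r+R)$ and integrate over $[0,T]\times(\extfour\cap\{|x|<cR\})$. The commutator with $\Box$ produces a positive quadratic form in $\phi'$ and $\phi/r$ weighted by $f'(r)$ and $f(r)/r$; summing over dyadic choices of $R$ yields the localized energy terms $\|\la x\ra^{-1/2-}\phi'\|$ and $\|\la x\ra^{-3/2}\phi\|$ on the left, along with the standard energy $\|\phi'(T,\cd)\|_2$. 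The boundary integral on $\bdy$ is, up to a factor, $\int|\partial_\nu\phi|^2\,(x\cdot\nu)\,dS$, which is nonnegative by star-shapedness and is discarded. The commutator with $h^{\alpha\beta}\partial_\alpha\partial_\beta$ is bounded pointwise by $(|\partial h|+|h|/r)|\nabla\phi|$ times the multiplier, and since $|h|\le\delta\ll1$ the leading piece is absorbable into the left side while the rest becomes the corresponding error on the right.

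Next I would commute with $\partial_t^j$ for $0\le j\le N$. Time differentiation preserves the Dirichlet condition and $\Box$, so $\partial_t^j\phi$ satisfies a perturbed equation whose forcing is $\partial_t^j F$ plus commutators $[\partial_t^j,h^{\alpha\beta}\partial_\alpha\partial_\beta]\phi$ that are absorbed into the $(|\partial h|+|h|/r)|\nabla\partial_t^j\phi|$ error term. The data contribution at $t=0$ is dominated by $\varepsilon$ via \eqref{dataSmallness2}, the compatibility conditions, and the hypothesis on $\|\partial^\mu\Box\phi(0,\cd)\|_2$. The remaining spatial derivatives in $\partial^\mu\phi'$ are then recovered by elliptic regularity for the Dirichlet Laplacian on $\extfour$: one rewrites $-\Delta\partial^\mu\phi=\partial_t^2\partial^\mu\phi-\Box\partial^\mu\phi$, together with the analogous identity absorbing the small $h^{ij}\partial_i\partial_j$ term, and inducts on the number of spatial derivatives, trading each one for either a $\partial_t$ (already controlled) or a $\Box\partial^\mu\phi$ with $|\mu|\le N-1$, which accounts for the last two terms on the right of \eqref{kssext1}.

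For \eqref{kssext2} the rotations $\Omega_{jk}$ do not preserve $\bdy$, and this is the main obstacle. I would resolve it with a cutoff $\eta\in C^\infty_0(\R^4)$ equal to $1$ on $\{|x|<1\}$ and supported in $\{|x|<2\}$, splitting $\phi=\eta\phi+(1-\eta)\phi$. On $\supp(1-\eta)$ one is off the obstacle, so $Z^\mu$ commutes freely with $\Box$ and picks up only coefficient commutators with $h^{\alpha\beta}$ which feed into the $|\partial h|+|h|/r$ error terms, and the boundaryless vector-field multiplier argument of \cite{MS} yields \eqref{kssext2} for $(1-\eta)\phi$. On $\supp\eta$ the angular fields $\Omega_{jk}$ are bounded smooth first-order operators expressible in terms of $\partial_x$, so their contribution is controlled by $\|\partial_x^\mu\phi'\|_{L^2([0,T]\times\{|x|<2\})}$, which is exactly the last term of \eqref{kssext2}. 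Finally, commutators produced by $\Box_h$ acting across $\eta$ are supported in the annulus $1\le|x|\le 2$ and are again bounded by the same local norm, completing the estimate.
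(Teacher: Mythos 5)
Your treatment of \eqref{kssext1} is essentially the paper's (and that of \cite{MS}): the $N=0$ multiplier identity with the star-shapedness sign for the boundary term, commutation with $\partial_t$, and elliptic regularity to trade spatial derivatives for time derivatives and the $\Box\partial^\mu\phi$ terms. For \eqref{kssext2} you depart from the paper: the paper runs the same multiplier argument directly on $Z^\mu\phi$ in the exterior domain — since $Z^\mu\phi$ no longer vanishes on $\bdy$, the resulting boundary integrals are estimated by a trace theorem, using that the coefficients of $Z$ are $O(1)$ on $\bdy$, and this is precisely the origin of the last term in \eqref{kssext2} — whereas you cut off near the obstacle and reduce to boundaryless estimates. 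That strategy is reasonable (it mirrors what the paper does later with $(1-\rho)Z^\mu u_l$), but as written it has concrete gaps.

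First, your cutoff commutator terms, supported in a fixed annulus, enter as forcing for the wave equation satisfied by $(1-\eta)\phi$. The forcing slot in the localized energy estimate is $\int_0^T\|\cdot\|_{L^2}\,dt$; estimating the annulus terms there via Schwarz in $t$ costs a factor $T^{1/2}$, destroying the uniformity in $T$ which is the whole point. To bound them by a local $L^2_{t,x}$ norm you need the compact-support mechanism behind Proposition \ref{propCpctF} (Schwarz in $t$ and $x$ with a weight, absorbing the multiplier term back into the left side); you assert the bound without invoking any such device. Second, the claim that commuting $Z^\mu$ past $h^{\alpha\beta}\partial_\alpha\partial_\beta$ produces only errors dominated by $(|\partial h|+|h|/r)|\nabla Z^\mu\phi|$ is false: $\Omega_{jk}h$ is only $O(|x|\,|\partial h|)$, which is not controlled by $|\partial h|+|h|/r$ at infinity. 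This commutation is also unnecessary, since the right side of \eqref{kssext2} keeps $\Box_h Z^\mu\phi$ intact; the correct move is to apply the $N=0$ perturbed estimate to $Z^\mu\phi$ (or to $Z^\mu((1-\eta)\phi)$) itself, with the $(|\partial h|+|h|/r)$ term arising only from the integrations by parts in the multiplier argument. Third, since $Z$ contains $\partial_t$, the near-obstacle contributions $Z^\mu(\eta\phi)$ and the cutoff commutators involve up to $N$ time derivatives of $\phi$; these are not literally bounded by the stated last term $\sum_{|\mu|\le N+1}\|\partial_x^\mu\phi'\|_{L^2([0,T]\times\{|x|<1\})}$, whose derivatives are purely spatial. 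One must either use the equation and elliptic regularity to convert time derivatives to spatial ones (as in the proof of \eqref{kssext1}), or accept a right-hand side with local spacetime derivatives, which is harmless in the application (it is controlled by the left side of \eqref{kssext1}) but is not the estimate as stated. Relatedly, your cutoff should be taken $\equiv 1$ near $\K$ and supported in $\{|x|<1\}$ so that the local terms live in the stated region rather than in $\{|x|<2\}$.
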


As stated above, the description of the proof in the previous sections
remains valid when $N=0$.  For the higher order cases, one uses the
fact that $\partial_t$ commutes with $\Box$ and preserves the
Dirichlet boundary conditions.  Then, by using elliptic regularity and
relating the Laplacian to time derivatives via the equation, one can
obtain \eqref{kssext1}.  To prove \eqref{kssext2}, one argues as in
the $N=0$ case and applies a trace theorem to the resulting boundary
terms.  Here we note that the coefficients of $Z$ are $O(1)$ on
$\bdy$.  Also note that the last term in \eqref{kssext2} is controlled
by the left side of \eqref{kssext1}.

\newsection{Proof of Theorem \ref{thm1.2}}
With the estimates of the previous sections in hand, we now proceed to
the proof of the main long time existence result.  

We solve the nonlinear equation via an iteration.  We set $u_0\equiv
0$ and recursively define $u_l$ to solve
\begin{equation}\label{iteration}
\begin{cases}
\Box u_l = a^\alpha u_{l-1} \partial_\alpha u_{l-1} +
b^{\alpha\beta}\partial_\alpha u_{l-1} \partial_\beta u_{l-1} + 
A^{\alpha\beta}
u_{l-1}\partial_\alpha\partial_\beta u_l + B^{\alpha\beta\gamma}\partial_\alpha u_{l-1}
\partial_\beta\partial_\gamma u_l\\
u_l|_\bdy=0\\
u_l(0,\cd)=f,\quad \partial_t u_l(0,\cd)=g.
\end{cases}
\end{equation}

For any fixed $0<\delta\le 1/8$, set
\begin{multline}
  \label{M}
M_l(T)=\sum_{|\mu|\le 50} \Bigl[\sup_{t\in [0,T]} \|(\partial^\mu
  u_l)'(t,\cd)\|_{L^2(\extfour)} + \|\la x\ra^{-1/2-\delta}
 (\partial^\mu u_l)'\|_{L^2(S_T^\K)}\Bigr] 
\\+ \sum_{|\mu|\le 49} \Bigl[\|\la x\ra^{-1/2-\delta} (Z^\mu
u_l)'\|_{L^2(S_T^\K)}
+  \|\la x\ra^{-1/2-2\delta} Z^\mu u_l\|_{L^2([0,T]\times \{|x|>2\})}
\\+ \sup_{t\in [0,T]} \|(Z^\mu u_l)'(t,\cd)\|_{L^2(\extfour)}
\Bigr]
+ \|u_l\|_{L^2([0,T]\times\{|x|<3\})}. 
\end{multline}
Our first goal is to show that $M_l(T)$ is bounded uniformly in $l$
and $T$.

We claim first that there is a $C_0$ so that $M_1(T)\le
C_0\varepsilon$ for any $T>0$.  Such boundedness follows easily from
Proposition \ref{kss.prop} (with $h^{\alpha\beta}=0$) and \eqref{dataSmallness2} for every term
in $M_1(T)$ except for the fourth term.  For this fourth term, we fix
a smooth test function $\rho$ which is identically $1$ on $\{|x|<1\}$
and vanishes outside of
$\{|x|<2\}$.  Since $\K\subset\{|x|<1\}$, we then have that $(1-\rho)Z^\mu u_1$ solves the
boundaryless wave equation
\[\Box (1-\rho)Z^\mu u_1 = [\Delta,\rho]Z^\mu u_1.\]
Since the commutator has compact support, we may apply
\eqref{weightedStrichartz} (with $F=0$) and \eqref{cpctF}.  What
results in \eqref{cpctF} from the commutator is easily bounded
using \eqref{kssext1} as above.

We now inductively show that 
\[M_l(T) \le 10 C_0 \varepsilon,\quad l=2,3,\dots.\]
We begin with bounding everything but the fourth term of $M_l(T)$
using Proposition \ref{kss.prop}.  Here, we set
\[h^{\alpha\beta}=-A^{\alpha\beta}u_{l-1} -
B^{\gamma\alpha\beta}\partial_\gamma u_{l-1}.\]
It then follows that terms $I$, $II$, $III$, $V$, and $VI$ of \eqref{M} are 
\begin{multline}\label{kssPieces}
  \le C_0\varepsilon + C\sum_{|\mu|\le 50} \int_0^T \|\partial^\mu
  \Box_h u_l(t,\cd)\|_{L^2(\extfour)}\:dt
+ C\sum_{|\mu|\le 50} \int_0^T
\|[\Box_h,\partial^\mu]u_l(t,\cd)\|_{L^2(\extfour)}\:dt
\\+ C\sum_{\substack{|\mu|\le 50\\|\sigma|\le 2}}
\int_0^T \|(\partial^\sigma u_{l-1}) (
  \partial^\mu u_l')\|_{L^2(\extfour)}\:dt
\\+  C\sum_{|\mu|\le 49} \int_0^T \|Z^\mu
  \Box_h u_l(t,\cd)\|_{L^2(\extfour)}\:dt
+ C\sum_{|\mu|\le 49} \int_0^T
\|[\Box_h,Z^\mu]u_l(t,\cd)\|_{L^2(\extfour)}\:dt
\\+ C\sum_{\substack{|\mu|\le 49\\|\sigma|\le 2}}
\int_0^T \|(Z^\sigma u_{l-1}) (
  Z^\mu u_l')\|_{L^2(\extfour)} \:dt
\\+C\sum_{|\mu|\le 49} \Bigl[\sup_{t\in [0,T]} \|\partial^\mu \Box
  u_l(t,\cd)\|_{L^2(\extfour)} + \|\partial^\mu \Box u_l\|_{L^2(S_T^\K)}\Bigr].
\end{multline}
Noting that
\begin{multline*}
  \sum_{|\mu|\le 50}\Bigl(|\partial^\mu \Box_h
  u_l|+|[\partial^\mu,\Box_h] u_l|\Bigr)
\lesssim \sum_{|\mu|\le 26} |\partial^\mu u_{l-1}| \sum_{|\nu|\le 50}
|\partial^\nu u_l'| \\+ \sum_{|\mu|\le 27} |\partial^\mu u_l|
\sum_{|\nu|\le 50} |\partial^\nu u_{l-1}'|
+ \sum_{|\mu|\le 26} |\partial^\mu u_{l-1}| \sum_{|\nu|\le 50}
|\partial^\nu u_{l-1}'|
\end{multline*}
and similarly
\begin{multline*}
  \sum_{|\mu|\le 49}\Bigl(|Z^\mu \Box_h
  u_l|+|[Z^\mu,\Box_h] u_l|\Bigr)
\lesssim \sum_{|\mu|\le 26} |Z^\mu u_{l-1}| \sum_{|\nu|\le
    49}
|Z^\nu u_l'| \\+ \sum_{|\mu|\le 27} |Z^\mu u_l|
\sum_{\substack{|\nu|\le 49\\|\sigma|\le 1}} |Z^\nu
\partial^\sigma u_{l-1}|
+ \sum_{|\mu|\le 26} |Z^\mu u_{l-1}| \sum_{\substack{|\nu|\le
    49\\|\sigma|\le 1}}
|Z^\nu \partial^\sigma u_{l-1}|,
\end{multline*}
we may apply \eqref{weightedSobolev} and the Schwarz inequality to
control the second through seventh terms in \eqref{kssPieces} by
\begin{multline*}
  \sum_{|\mu|\le 29} \|\la x\ra^{-3/4} Z^\mu u_{l-1}\|_{L^2(S_T^\K)}
  \Bigl(\sum_{|\nu|\le 50} \|\la x\ra^{-3/4} \partial^\nu
  u_l'\|_{L^2(S_T^\K)} + \sum_{|\nu|\le 49}
  \|\la x\ra^{-3/4} Z^\nu u_l'\|_{L^2(S_T^\K)}\Bigr)
\\+ \sum_{|\mu|\le 30} \|\la x\ra^{-3/4} Z^\mu u_{l}\|_{L^2(S_T^\K)}
  \Bigl(\sum_{|\nu|\le 50} \|\la x\ra^{-3/4} \partial^\nu
  u_{l-1}'\|_{L^2(S_T^\K)} + \sum_{\substack{|\nu|\le 49\\|\sigma|\le 1}}
  \|\la x\ra^{-3/4} Z^\nu\partial^\sigma u_{l-1}\|_{L^2(S_T^\K)}\Bigr)
\\+\sum_{|\mu|\le 29} \|\la x\ra^{-3/4} Z^\mu u_{l-1}\|_{L^2(S_T^\K)}
  \Bigl(\sum_{|\nu|\le 50} \|\la x\ra^{-3/4} \partial^\nu
  u_{l-1}'\|_{L^2(S_T^\K)} + \sum_{\substack{|\nu|\le 49\\|\sigma|\le 1}}
  \|\la x\ra^{-3/4} Z^\nu\partial^\sigma u_{l-1}\|_{L^2(S_T^\K)}\Bigr).
\end{multline*}

The last two terms of \eqref{kssPieces} are controlled similarly, though
more simply.  Here, for the pointwise in time terms, it is worth
noting the following variant of \eqref{weightedSobolev}.
\begin{proposition}
If $h\in C^\infty_0(\R^4)$ and $R>0$, then
\[R \|h\|_{L^\infty(\{R/2<|x|<R\})}\lesssim \sum_{|\mu|\le 2}
\|\nabla_x \Omega^\mu h\|_{L^2(\R^4)}.\]  
\end{proposition}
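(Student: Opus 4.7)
My plan is to follow the template of Lemma~\ref{lemmaWeightedSobolev}: rescale to unit size, localize to an annulus, pass to polar coordinates, and invoke Sobolev embedding on $\R\times S^3$. The single $\nabla_x$ that sits outside the $\Omega^\mu$ on the right-hand side is precisely what compensates for the missing factor $R^{3/2}$ appearing in the earlier lemma.

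First, since each $\Omega_{jk}$ commutes with dilation, the substitution $h_R(y)=h(Ry)$ together with $\nabla_x h(Ry)=R^{-1}(\nabla_y h_R)(y)$ and a change of variables reduces the claim to $R=1$, namely
\[
\|h\|_{L^\infty(\{1/2<|y|<1\})}\lesssim \sum_{|\mu|\le 2}\|\nabla_y\Omega^\mu h\|_{L^2(\R^4)}.
\]
I would then fix a smooth radial cutoff $\chi\in C^\infty_0((1/4,2))$ equal to $1$ on $[1/2,1]$ and work with $\chi(|y|)h$ in place of $h$. The left-hand side is unaffected, while commutator terms from $\chi$ are absorbed on the right by Hardy's inequality (valid because $h\in C^\infty_0(\R^4)$). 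Hence I may assume $h$ is supported in the bounded annulus $\{1/4<|y|<2\}$, on which $|y|\approx 1$.

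The core step is then a Sobolev embedding on $\R\times S^3$. In polar coordinates $y=r\omega$, the volume element $r^3\,dr\,d\omega$ is comparable to $dr\,d\omega$ on the support, and $|\nabla_y w|^2=|\partial_r w|^2+r^{-2}|\nabla_\omega w|^2$, while the rotations $\Omega_{jk}$ span the tangent bundle of $S^3$, so $\sum_{|\mu|\le k}|\Omega^\mu \tilde h|$ is pointwise comparable to $\sum_{j\le k}|\nabla_\omega^j \tilde h|$. Consequently the right-hand side of the desired estimate dominates
\[
\sum_{|\mu|\le 2}\|\partial_r\Omega^\mu\tilde h\|_{L^2(\R\times S^3)}+\sum_{|\mu|\le 3}\|\Omega^\mu\tilde h\|_{L^2(\R\times S^3)},
\]
with $\tilde h(r,\omega)=h(r\omega)$. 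I then apply the fiberwise Sobolev embedding $H^2(S^3)\hookrightarrow L^\infty(S^3)$ first (consuming two rotations, since $\dim S^3=3$), followed by a one-dimensional $H^1_r\hookrightarrow L^\infty_r$ bound via the identity $|\tilde h(r,\omega)|^2=-2\int_r^\infty \tilde h(\sigma,\omega)\,\partial_\sigma\tilde h(\sigma,\omega)\,d\sigma$, which is valid since $\tilde h$ has compact $r$-support.

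The main bookkeeping obstacle is ensuring that the final right-hand side is exactly $\sum_{|\mu|\le 2}\|\nabla_x\Omega^\mu h\|_{L^2(\R^4)}$, with no residual undifferentiated term and with only one radial derivative. Ordering the two Sobolev embeddings as above, and using the compact support through Hardy to convert the remaining $\|\tilde h\|_{L^2_r}$ factor into $\|\partial_r \tilde h\|_{L^2_r}$, resolves this cleanly.
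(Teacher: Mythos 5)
Your proof is correct, and it is essentially the argument the paper has in mind: the paper gives no proof of this proposition, deferring to \cite{DMSZ}, and your route --- exact scale-invariance to reduce to $R=1$, a radial cutoff whose commutator is absorbed by Hardy's inequality, and then Sobolev embedding on $\R\times S^3$ (two rotations via $H^2(S^3)\hookrightarrow L^\infty$, one radial derivative via the fundamental-theorem/Cauchy--Schwarz identity) --- is the standard one, matching the sketch given for Lemma \ref{lemmaWeightedSobolev}. One small remark: with the ordering you chose (spherical embedding at fixed $r$, then the one-dimensional bound applied to $r\mapsto\|\Omega^\mu h(r\cdot)\|_{L^2(S^3)}$), you only need $\|\Omega^\mu h\|_{L^2}$ and $\|\partial_r\Omega^\mu h\|_{L^2}$ for $|\mu|\le 2$, so the $|\mu|\le 3$ term in your intermediate display is harmless but not actually required.
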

See, e.g., \cite[Lemma 2.1]{DMSZ}.
It is also necessary to invoke a Hardy inequality.  These details are
left to the reader.

With this, we have seen that terms $I$, $II$, $III$, $V$, and $VI$ of
\eqref{M} are
\[\le C_0\varepsilon + CM_{l-1}(T)M_l(T) + C (M_{l-1}(T))^2.\]

It remains only to bound term $IV$ of \eqref{M}.  For this, for a
solution $u_l$ to \eqref{iteration}, we examine $(1-\rho)Z^\mu u_l$
where $|\mu|\le 49$ and $\rho$ is a smooth cutoff function as above.
We have that $(1-\rho)Z^\mu u_l$ solves
the boundaryless wave equation
\begin{multline*}
  \Box (1-\rho)Z^\mu u_l = [\Delta,\rho]Z^\mu u_l 
+  (1-\rho)\sum_{|\sigma|+|\nu|\le |\mu|} a^\alpha_{\mu\nu\sigma} \partial_\alpha (Z^\nu
  u_{l-1} Z^\sigma u_{l-1})
\\+ (1-\rho)\sum_{|\sigma|+|\nu|\le|\mu|} b^{\alpha\beta}_{\mu\nu\sigma}
\partial_\alpha Z^\sigma u_{l-1} \partial_\beta Z^\nu u_{l-1} 
+ (1-\rho)\sum_{|\sigma|+|\nu|\le |\mu|} A^{\alpha\beta}_{\mu\nu\sigma}
\partial_\alpha (Z^\sigma u_{l-1} \partial_\beta Z^\nu u_l)
\\+ (1-\rho)\sum_{|\sigma|+|\nu|\le |\mu|}
\tilde{A}^{\alpha\beta}_{\mu\nu\sigma}  \partial_\alpha Z^\sigma u_{l-1} \partial_\beta
Z^\nu u_l
+ (1-\rho) \sum_{\substack{|\sigma|+|\nu|\le
    |\mu|\\|\sigma|>0}}B^{\alpha\beta\gamma}_{\mu\nu\sigma} \partial_\alpha Z^\sigma
u_{l-1} \partial_\beta\partial_\gamma Z^\nu u_l
\\+ (1-\rho) B^{\alpha\beta\gamma}_{\mu\mu 0} \partial_\beta (\partial_\alpha u_{l-1}
\partial_\gamma Z^\mu u_l)
+ (1-\rho) \tilde{B}^{\alpha\beta\gamma}_{\mu\mu 0} \partial_\alpha\partial_\beta u_{l-1}
\partial_\gamma Z^\mu u_l.
\end{multline*}
for appropriate coefficients $a^\alpha_{\mu\nu\sigma}$,
$b^{\alpha\beta}_{\mu\nu\sigma}$, $A^{\alpha\beta}_{\mu\nu\sigma}$,
etc. based on those in \eqref{trucatedEquation}.
Here we have used the fact that $[\partial,Z]$ is in the span of
$\{\partial\}$, and the new coefficients depend on these commutators
as well as the appropriate binomial coefficients.
We further decompose the right side into
\begin{multline*}
  [\Delta,\rho]Z^\mu u_l 
+ \sum_{|\sigma|+|\nu|\le |\mu|} a^\alpha_{\mu\nu\sigma}
\partial_\alpha ((1-\rho) Z^\mu
  u_{l-1} Z^\sigma u_{l-1})
\\+ \sum_{|\sigma|+|\nu|\le |\mu|} A^{\alpha\beta}_{\mu\nu\sigma}
\partial_\alpha ((1-\rho) Z^\sigma u_{l-1} \partial_\beta Z^\nu u_l)
+ B^{\alpha\beta\gamma}_{\mu\mu 0} \partial_\beta ((1-\rho)\partial_\alpha u_{l-1}
\partial_\gamma Z^\mu u_l)
\\+ (1-\rho)\sum_{|\sigma|+|\nu|\le|\mu|} b^{\alpha\beta}_{\mu\nu\sigma}
\partial_\alpha Z^\sigma u_{l-1} \partial_\beta Z^\nu u_{l-1} 
+ (1-\rho)\sum_{|\sigma|+|\nu|\le |\mu|}
\tilde{A}^{\alpha\beta}_{\mu\nu\sigma}  \partial_\alpha Z^\sigma u_{l-1} \partial_\beta
Z^\nu u_l
\\+ (1-\rho) \sum_{\substack{|\sigma|+|\nu|\le
    |\mu|\\|\sigma|>0}}B^{\alpha\beta\gamma}_{\mu\nu\sigma} \partial_\alpha Z^\sigma
u_{l-1} \partial_\beta\partial_\gamma Z^\nu u_l
+ (1-\rho) \tilde{B}^{\alpha\beta\gamma}_{\mu\mu 0} \partial_\alpha\partial_\beta u_{l-1}
\partial_\gamma Z^\mu u_l
\\+
\sum_{|\sigma|+|\nu|\le|\mu|}C^{\alpha}_{\mu\nu\sigma}(\partial_\alpha\rho)Z^\nu
u_{l-1} Z^\sigma u_{l-1} + \sum_{\substack{|\sigma|+|\nu|\le
    |\mu|\\\delta \le
    1}}\tilde{C}^{\alpha\beta\gamma}_{\mu\nu\sigma\delta}(\partial_\alpha\rho)
  \partial_\beta^\delta Z^\sigma u_{l-1} \partial_\gamma Z^\nu u_l.
\end{multline*}
We write $(1-\rho)Z^\mu u_l = v_1 + v_2 + v_3$ where $v_1$ solves
$\Box v_1 = [\Delta,\rho]Z^\mu u_l$ and
\begin{multline*}\Box v_2 =  \sum_{|\sigma|+|\nu|\le |\mu|} a^\alpha_{\mu\nu\sigma}
\partial_\alpha ((1-\rho) Z^\nu
  u_{l-1} Z^\sigma u_{l-1})
\\+ \sum_{|\sigma|+|\nu|\le |\mu|} A^{\alpha\beta}_{\mu\nu\sigma}
\partial_\alpha ((1-\rho) Z^\sigma u_{l-1} \partial_\beta Z^\mu u_l)
+ B^{\alpha\beta\gamma}_{\mu\mu 0} \partial_\beta ((1-\rho)\partial_\alpha u_{l-1}
\partial_\gamma Z^\nu u_l).\end{multline*}
Both $v_1$ and $v_2$ are taken to have vanishing initial data.  It
then follows that $v_3$ solves
\begin{multline*}
  \Box v_3=(1-\rho)\sum_{|\sigma|+|\nu|\le|\mu|} b^{\alpha\beta}_{\mu\nu\sigma}
\partial_\alpha Z^\sigma u_{l-1} \partial_\beta Z^\nu u_{l-1} 
+ (1-\rho)\sum_{|\sigma|+|\nu|\le |\mu|}
\tilde{A}^{\alpha\beta}_{\mu\nu\sigma}  \partial_\alpha Z^\sigma u_{l-1} \partial_\beta
Z^\nu u_l
\\+ (1-\rho) \sum_{\substack{|\sigma|+|\nu|\le
    |\mu|\\|\sigma|>0}}B^{\alpha\beta\gamma}_{\mu\nu\sigma} \partial_\alpha Z^\sigma
u_{l-1} \partial_\beta\partial_\gamma Z^\nu u_l
+ (1-\rho) \tilde{B}^{\alpha\beta\gamma}_{\mu\mu 0} \partial_\alpha\partial_\beta u_{l-1}
\partial_\gamma Z^\mu u_l
\\+
\sum_{|\sigma|+|\nu|\le|\mu|}C^{\alpha}_{\mu\nu\sigma}(\partial_\alpha\rho)Z^\nu
u_{l-1} Z^\sigma u_{l-1} + \sum_{\substack{|\sigma|+|\nu|\le
    |\mu|\\\delta \le
    1}}\tilde{C}^{\alpha\beta\gamma}_{\mu\nu\sigma\delta}(\partial_\alpha\rho)
  \partial_\beta^\delta Z^\sigma u_{l-1} \partial_\gamma Z^\nu u_l
\end{multline*}
with Cauchy data which matches that of $(1-\rho)Z^\mu u_l$.

To bound 
\[\|\la x\ra^{-1/2-2\delta} v_1\|_{L^2([0,T]\times \R^4)},\] 
we apply \eqref{cpctF}.  It then remains to bound 
\[\|Z^\mu
u_l\|_{L^2([0,T]\times \{1<|x|<2\})} + \|(Z^\mu
u_l)'\|_{L^2([0,T]\times \{1<|x|<2\})}\] 
which we have done in the
previous step.

To establish a bound for $v_2$ in the same space, we may apply
\eqref{lindbladVariant}.  Using the compatibility conditions and
\eqref{dataSmallness2}, we have
\begin{multline*}
  \|\la x\ra^{-1/2-2\delta} v_2\|_{L^2([0,T]\times\R^4)}\le C_0
  \varepsilon + C\sum_{|\nu|\le 49, |\sigma|\le 25}\int_0^T
  \|Z^\sigma u_{l-1} Z^\nu u_{l-1}\|_{L^2(\extfour)} \:dt
\\+ C\sum_{|\nu|\le 49, |\sigma|\le 25} \int_0^T \|Z^\sigma u_{l-1}
\partial Z^\nu u_l\|_{L^2(\extfour)}\:dt
\\+ C \sum_{|\nu|\le 49, |\sigma|\le 25} \int_0^T \|Z^\sigma \partial
u_l Z^\nu u_{l-1}\|_{L^2(\extfour)}\:dt.
\end{multline*}
For each of the latter three terms, we apply \eqref{weightedSobolev}
to the lower order term on each dyadic annulus.  By applying the
Schwarz inequality and summing over the dyadic intervals, we see that
this is
\[\le C_0 \varepsilon + C (M_{l-1}(T))^2 + C M_{l-1}(T) M_l(T)\]
provided $\delta\le 1/8$.

For $v_3$, we use \eqref{weightedStrichartz}.  Using, again, the
compatibility conditions and \eqref{dataSmallness2}, we have
\begin{multline*}
  \|\la x\ra^{-1/2-2\delta} v_3\|_{L^2([0,T]\times\R^4)} \le
  C_0\varepsilon + C\sum_{|\nu|\le 49,|\sigma|\le 25}
\|\la x\ra^{-1-2\delta} (\partial Z^\sigma u_{l-1})(\partial Z^\nu u_{l-1})
\|_{L^1_tL^1_rL^2_\omega}
\\+ C\sum_{|\nu|\le 49,|\sigma|\le 25}
\|\la x\ra^{-1-2\delta} (\partial Z^\sigma u_{l-1})(\partial Z^\nu u_{l})
\|_{L^1_tL^1_rL^2_\omega}
\\+ C\sum_{|\nu|\le 49,|\sigma|\le 25}
\|\la x\ra^{-1-2\delta} (\partial Z^\sigma u_{l})(\partial Z^\nu
u_{l-1}) \|_{L^1_tL^1_rL^2_\omega}
\\+C\sum_{|\nu|\le 49,|\sigma|\le 25} \|Z^\sigma u_{l-1} Z^\nu
u_{l-1}\|_{L^1_tL^1_rL^2_\omega([0,T]\times \{1<|x|<2\})} 
\\+ C\sum_{|\nu|\le 49,|\sigma|\le 26}
\|Z^\sigma u_{l-1} \partial Z^\nu
u_l\|_{L^1_tL^1_rL^2_\omega([0,T]\times \{1<|x|<2\})}
\\+ C\sum_{\substack{|\nu|\le 49,|\sigma|\le 26\\\delta\le 1}} \|Z^\sigma
u_l \partial^\delta Z^\nu u_{l-1}\|_{L^1_tL^1_rL^2_\omega([0,T]\times\{1<|x|<2\})}.
\end{multline*}
By applying Sobolev embeddings on $S^3$ to the lower order term in
each of the last six terms and utilizing the Schwarz inequality,  
we again have that this is controlled by
\[C_0\varepsilon + C(M_{l-1}(T))^2 + C M_{l-1}(T) M_l(T).\]

By combining the bounds for each of these pieces, we see that
\[M_l(T)\le 4C_0\varepsilon + C(M_{l-1}(T))^2 + C M_{l-1}(T)M_l(T).\]
If we apply the inductive hypothesis $M_{l-1}(T)\le 10
C_0\varepsilon$, it indeed follows that
\[M_l(T)\le 10 C_0 \varepsilon\]
provided that $\varepsilon$ is sufficiently small.

It remains to show that $\{u_l\}$ is a Cauchy sequence in similar
spaces.  To this end, we set
\begin{multline}
  \label{A}
A_l(T)=\sum_{|\mu|\le 49} \Bigl[\sup_{t\in [0,T]} \|(\partial^\mu
  (u_l-u_{l-1}))'(t,\cd)\|_{L^2(\extfour)} + \|\la x\ra^{-1/2-\delta}
 (\partial^\mu (u_l-u_{l-1}))'\|_{L^2(S_T^\K)}\Bigr] 
\\+ \sum_{|\mu|\le 48} \Bigl[\|\la x\ra^{-1/2-\delta} (Z^\mu
(u_l-u_{l-1}))'\|_{L^2(S_T^\K)}
+  \|\la x\ra^{-1/2-2\delta} Z^\mu (u_l-u_{l-1})\|_{L^2([0,T]\times \{|x|>2\})}
\\+ \sup_{t\in [0,T]} \|(Z^\mu (u_l-u_{l-1}))'(t,\cd)\|_{L^2(\extfour)}
\Bigr]
+ \|u_l-u_{l-1}\|_{L^2([0,T]\times\{|x|<3\})}. 
\end{multline}
Using quite similar arguments and the $O(\varepsilon)$ bound on
$M_l(T)$, one can prove
\[A_l(T)\le \frac{1}{2}A_{l-1}(T).\]
This suffices to show that the sequence converges.  Its limit is
indeed the solution $u$, which completes the proof.



\begin{thebibliography}{MA}



 



\bibitem{DMSZ}Y. Du, J. Metcalfe, C. D. Sogge and Y. Zhou: 
{\em Concerning the Strauss conjecture and almost global existence 
for nonlinear Dirichlet-wave equations in $4$-dimensions},
Comm. Partial Differential Equations {\bf 33} (2008), 1487--1506.

\bibitem{DZ} Y. Du and Y. Zhou: 
{\em The life span for nonlinear wave equation outside of star-shaped
obstacle in three space dimensions}, Comm. Partial Differential
Equations {\bf 33} (2008), 1455--1486.

\bibitem{FW} D. Fang and C. Wang: 
{\em Weighted Strichartz Estimates with Angular Regularity and their 
Applications}, arXiv:0802.0058.





\bibitem{HMSSZ} K. Hidano, J. Metcalfe, H. F. Smith, C. D. Sogge,
  Y. Zhou: {\em On abstract Strichartz estimates and the Strauss
    conjecture for nontrapping obstacles}.  Trans. Amer. Math. Soc.,
  to appear.

\bibitem{Hormander} L. H\"ormander: {\em On the fully nonlinear Cauchy
  problem with small data. II.}  Microlocal analysis and nonlinear
  waves (Minneapolis, MN, 1988--1989), 51--81, IMA Vol. Math. Appl.,
  30, Springer, New York, 1991.



\bibitem{KSS}  M. Keel, H. F. Smith, and C. D. Sogge: 
{\em Almost global existence for some semilinear wave equations}, 
J. Anal. Math. {\bf 87} (2002), 265-279.

\bibitem{KSS2} M. Keel, H. F. Smith, and C. D. Sogge: {\em Global
  existence for a quasilinear wave equation outside of star-shaped
  domains}.  J. Funct. Anal. {\bf 189} (2002), 155--226.


\bibitem{Klainerman} S. Klainerman: {\em The null condition and global
  existence to nonlinear wave equatoins}.  Lect. Appl. Math. {\bf 23}
  (1986), 293--326.

\bibitem{Koch} H. Koch: {\em Mixed problems for fully nonlinear
  hyperbolic equations}.  Math. Z.  {\bf 214} (1993), 9--42.



\bibitem{Lindblad} H. Lindblad: {\em On the lifespan of solutions of
  nonlinear wave equations with small initial data}.  Comm. Pure
  Appl. Math. {\bf 43} (1990), 445--472.






\bibitem{MS3} J. Metcalfe and C. D. Sogge: {\em Hyperbolic trapped
  rays and global existence of quasilinear wave equations}.
  Invent. Math. {\bf 159} (2005), 75--117.

\bibitem{MS} J. Metcalfe and C. D. Sogge: {\em Long time existence of
  quasilinear wave equations exterior to star-shaped obstacles via
  energy methods}.  SIAM J. Math. Anal. {\bf 38} (2006), 188--209.

\bibitem{MS2} J. Metcalfe and C. D. Sogge: {\em Global existence for
  Dirichlet-wave equations with quadratic nonlinearities in high
  dimensions}.  Math. Ann. {\bf 336} (2006), 391--420.

\bibitem{MSS} J. Metcalfe, C. D. Sogge, and A. Stewart: {\em Nonlinear
  hyperbolic equations in infinite homogeneous waveguides}.
  Comm. Partial Differential Equations {\bf 30} (2005), 643--661.

















\end{thebibliography}
\end{document}